\def\NN{\mathbb N}
\def\ZZ{\mathbb Z}
\def\vep{\varepsilon}
\def\tr{\hbox{tr}}
\def\ms{\medskip}
\def\beq{\begin{equation}}
\def\eeq{\end{equation}}
\newtheorem{theo}{\bf Theorem}[section]
\newtheorem{prop}[section]{\bf Proposition}
\newtheorem{defi}{\bf Definition}[section]
\newtheorem{cor}{\bf Corollary}[section]
\newtheorem{lem}{\bf Lemma}[section]
\newcommand{\blanksquare}{\,\,\,$\sqcup\!\!\!\!\sqcap$}
\newenvironment{proof}{{\flushleft{\bf Proof: }}}{\hspace{\stretch{1}} \blanksquare\\}
\newcounter{example}
{{\stepcounter{example}}{\flushleft {\bf Example \arabic{example}:}}}%
{\par}
\begin{document}

\title{\textbf{Rationality of the zeta function for Ruelle-expanding maps}}

\author{M\'ario Alexandre Magalh\~aes} \thanks{Supported by a phd scholarship from Funda\c c\~ao para a Ci\^encia e a Tecnologia}

\maketitle

\begin{abstract}
We will prove that the zeta function for {\it Ruelle-expanding} maps is rational.
\end{abstract}

\noindent\emph{MSC 2000:} primary 37H15, 37D08; secondary 47B80.\\
\emph{keywords:} Zeta function; Ruelle-expanding\\

\begin{section}{\sc Introduction}
For any map $f$ with a finite number of periodic points for each period we can associate its {\it zeta function} defined as
\[\zeta_f(t)=\exp\left(\sum_{n=1}^\infty\frac{N_n(f)}{n}t^n\right)\]
where $N_n(f)$ is the number of periodic points with period $n$. In some cases, it is known that $\zeta_f(t)$ is a rational function. Those cases include the Markov subshifts of finite type (unilateral and bilateral) and Axiom A diffeomorphisms. Besides, in the case of the subshifts, an explicit formula relates the topological entropy and the radius of convergence of the zeta function. Another class of maps with this property is the {\it Ruelle-expanding} maps. This concept, created by Ruelle, generalizes the notion of expanding maps defined on manifolds, freeing its essence from the derivative's constraints. Our main result will be the following

\begin{theo}
If $f$ is Ruelle-expanding, then its zeta function is rational.
\end{theo}
Its proof will emulate the classical argument used to ensure the rationality of the zeta function for a $\mathcal{C}^1$ diffeomorphism defined on a hyperbolic set with local product structure, which profits by the existence of Markov partitions with arbitrarily small diameter. Within the Ruelle-expanding setting, we will prove the existence of a finite cover with analogous properties, which will play the same role the Markov partition did. Moreover, we will see that there is also a relation between the topological entropy and the radius of convergence of the zeta function in this case.
\end{section}

\begin{section}{\sc The zeta function}
\begin{defi}
\em If $f$ is a continuous map of a topological space $X$, let $N_n(f)$ denote the number of periodic points with period $n$, that is, the points $x$ for which $f^n(x)=x$. If $N_n(f)<\infty,\forall n\in\NN$, we define the {\it \bf zeta function} of $f$ as

\[\zeta_f(t)=\exp\left(\sum_{n=1}^\infty\frac{N_n(f)}{n}t^n\right)\]
\end{defi}

\ms

As the exponential is an entire function, the radius of convergence of $\zeta_f(t)$ is given by

\[\rho=\frac{1}{\limsup \sqrt[n]{\frac{N_n(f)}{n}}}=\frac{1}{\limsup \sqrt[n]{N_n(f)}}\] (since $\lim\sqrt[n]{n}=1$).

Let $L=-\log\rho$, so that $\rho=e^{-L}$. Then, we have

\[L=-\log \frac{1}{\limsup \sqrt[n]{N_n(f)}}=\limsup(1/n)\log N_n(f)\]

\begin{subsection}{\bf Examples}

\begin{subsubsection}{\emph{\sc Markov subshifts of finite type}}

Let $k$ be a natural number and $[k]$ the set $\left\{1,2,\ldots,k\right\}$ with the discrete topology. Consider $\Sigma(k)$ the product space $[k]^\ZZ$, whose elements are the sequences $\underline{a}=(\ldots,a_{-1},a_0,a_1,\ldots)$, with $a_n\in [k],\forall n\in\ZZ$. This space has a product topology, which can be generated by the metric given by

\[d(\underline{a},\underline{b})=\sum_{n=-\infty}^{\infty}\frac{\delta_n(\underline{a},\underline{b})}{2^{2\left|n\right|}}\]
where $\delta_n(\underline{a},\underline{b})$ is $0$ when $a_n=b_n$ and $1$ otherwise. Notice that

\[0\leq d(\underline{a},\underline{b})\leq\sum_{n\in\ZZ}\frac{1}{2^{2\left|n\right|}}=1+2\sum_{n\in\NN}\frac{1}{2^{2n}}=\frac{5}{3}\]
and that $d(\underline{a},\underline{b})\geq 1\Leftrightarrow a_0\neq b_0$, since

\[\sum_{n\in\ZZ\backslash\left\{0\right\}}\frac{1}{2^{2\left|n\right|}}=2\sum_{n\in\NN}\frac{1}{2^{2n}}=\frac{2}{3}<1\]

On $\Sigma(k)$ we have defined a homeomorphism, called {\it shift}, by

\[(\sigma(\underline{a}))_i=a_{i+1},i\in\ZZ\]
This way, $\sigma$ has a special class of closed invariant sets. Let $M_k$ be the set of $k\times k$ matrices with entries 0 or 1. For each $A\in M_k$, we define $\Sigma_A=\{\underline{a}\in\Sigma(k):A_{a_i a_{i+1}}=1\}$, which is a closed invariant subspace of $\Sigma(k)$. The pair $(\Sigma_A,\sigma_A)$, where $\sigma_A=\sigma|_{\Sigma_A}$, is called a {\it subshift of finite type}.

A matrix $A\in M_k$ is said to be {\it irreducible} if $\forall i,j\in[k],\exists n\in\NN:(A^n)_{i j}>0$. In this case, by the {\it Perron-Frobenius Theorem}, we know that it has a non-negative simple eigenvalue $\lambda$ which is greater than the absolute value of all the others eigenvalues, that is, such that $max_{i\in[k]}\left|\lambda_i\right|=\lambda$, where $\lambda_1,\lambda_2,\ldots,\lambda_k$ are all the eigenvalues of $A$. Besides, its entropy is $\log\lambda$. In particular, the entropy of the full shift $\sigma:\Sigma(k)\rightarrow\Sigma(k)$ is $\log k$. (See \cite{Walt}). For such a $\sigma_A$, we can actually compute the zeta function: it is a rational function and $L$ is precisely the entropy of $f$. Let us recall why.\\

We say that a finite sequence $a_0 a_1 ... a_n$ of elements in $[k]$ is {\it admissible} if $A_{a_i a_{i+1}}=1$. Let $N_n(p,q,A)$ denote the number of admissible sequences of length $n+1$ which start at $p$ and end at $q$.

\begin{prop}
$N_n(p,q,A)=(A^n)_{p q}$
\end{prop}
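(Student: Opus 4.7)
The plan is to prove this by induction on $n$, exploiting the fact that matrix multiplication mirrors concatenation of admissible paths in a natural way.

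For the base case $n=1$, I would observe that $N_1(p,q,A)$ counts admissible sequences $a_0 a_1$ of length $2$ with $a_0=p$ and $a_1=q$. There is exactly one such sequence (namely $pq$) when $A_{pq}=1$ and none when $A_{pq}=0$; either way the count is $A_{pq}=(A^1)_{pq}$.

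For the inductive step, assume $N_{n-1}(p,r,A)=(A^{n-1})_{pr}$ for every pair $p,r\in[k]$. Any admissible sequence $a_0 a_1\ldots a_n$ with $a_0=p$ and $a_n=q$ decomposes uniquely as an admissible sequence $a_0 a_1\ldots a_{n-1}$ of length $n$ starting at $p$ and ending at some $a_{n-1}=r$, followed by the single transition $r\to q$, which is legal precisely when $A_{rq}=1$. Partitioning the sequences counted by $N_n(p,q,A)$ according to the penultimate symbol $r$ yields
\[
N_n(p,q,A)=\sum_{r=1}^{k} N_{n-1}(p,r,A)\cdot A_{rq}=\sum_{r=1}^{k}(A^{n-1})_{pr}A_{rq}=(A^n)_{pq},
\]
where the second equality uses the inductive hypothesis and the third is the definition of matrix multiplication.

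There is no real obstacle here; this is essentially a bookkeeping argument, the only care needed being to formalize the bijection between admissible $(n+1)$-sequences from $p$ to $q$ and pairs (admissible $n$-sequence from $p$ to $r$, admissible transition $r\to q$), which the definition of admissibility makes immediate.
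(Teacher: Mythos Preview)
Your proof is correct and follows essentially the same approach as the paper: induction on $n$, with the inductive step partitioning admissible sequences from $p$ to $q$ by their penultimate symbol $r$ and using $N_n(p,q,A)=\sum_{r} N_{n-1}(p,r,A)A_{rq}$. The only difference is that you spell out the bijection and the base case a bit more explicitly than the paper does.
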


\begin{proof}
We use induction over $n$. For $n=1$, this is true by definition of $A$.
Suppose this is true for $n=m-1$. Then, for $n=m$ we have

\[N_m(p,q,A)=\sum_{r=1}^k N_{m-1}(p,r,A)A_{r q}=\sum_{r=1}^k (A^{m-1})_{p r}A_{r q}=(A^m)_{p q}\]
and the number of admissible sequences of length $n+1$ which start and end with the same element of $[k]$ is

\[\sum_{p=1}^k N_n(p,p,A)=\sum_{p=1}^k (A^n)_{p p}=\tr(A^n)\]
Notice that $\underline{a}\in\Sigma_A$ is a fixed point of $\sigma_A^n$ if and only if $a_i=a_{i+n},\forall i\in\ZZ$. Then, for each fixed point of $\sigma_A^n$ given by

\[\underline{a}=(...,a_0,a_1,a_2,...,a_n,a_{n+1},a_{n+2},...)=(...,a_0,a_1,a_2,...,a_0,a_1,a_2,...)\]
we can associate a unique admissible sequence of length $n+1$ given by $a_0 a_1 a_2 ... a_{n-1} a_0$. Therefore, the number of fixed points of $\sigma_A^n$ is $N_n(\sigma_A)=\tr(A^n)$.
\end{proof}

\begin{theo}
$\zeta_{\sigma_A}(t)=1/\det(I-tA)$
\end{theo}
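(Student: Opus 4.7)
The plan is to use the formula $N_n(\sigma_A)=\tr(A^n)$ from the previous proposition and then recognize the resulting power series as the logarithmic derivative (or logarithm) of $1/\det(I-tA)$.

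First I would substitute to get
\[
\zeta_{\sigma_A}(t)=\exp\left(\sum_{n=1}^\infty\frac{\tr(A^n)}{n}t^n\right).
\]
To evaluate the sum, I would pass to the eigenvalues $\lambda_1,\ldots,\lambda_k$ of $A$ (working over $\CC$, with multiplicity; one can use the Jordan form to justify $\tr(A^n)=\sum_i\lambda_i^n$ even in the non-diagonalizable case). Then
\[
\sum_{n=1}^\infty\frac{\tr(A^n)}{n}t^n=\sum_{i=1}^k\sum_{n=1}^\infty\frac{(\lambda_i t)^n}{n}=-\sum_{i=1}^k\log(1-\lambda_i t),
\]
valid for $|t|<1/\max_i|\lambda_i|$. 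Exponentiating yields
\[
\zeta_{\sigma_A}(t)=\prod_{i=1}^k\frac{1}{1-\lambda_i t}=\frac{1}{\det(I-tA)},
\]
the last equality because $\det(I-tA)=\prod_i(1-\lambda_i t)$ (for instance, by triangularising $A$).

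The main thing to be careful about is analytic bookkeeping rather than any real obstacle: both sides are well-defined as formal power series in $t$, and the identity of formal power series follows from the eigenvalue computation above, so the equality extends from the disk of convergence to the whole rational function. As a byproduct, the radius of convergence is $\rho=1/\max_i|\lambda_i|$, which, when $A$ is irreducible, equals $1/\lambda$ by Perron--Frobenius, so $L=\log\lambda$ coincides with the topological entropy of $\sigma_A$, as claimed in the introduction.
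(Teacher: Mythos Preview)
Your proof is correct and follows essentially the same route as the paper: use $N_n(\sigma_A)=\tr(A^n)$, write $\tr(A^n)=\sum_i\lambda_i^n$, sum the logarithmic series $\sum_n (\lambda_i t)^n/n=-\log(1-\lambda_i t)$, and exponentiate to get $\prod_i(1-\lambda_i t)^{-1}=1/\det(I-tA)$. Your added remarks on Jordan form and on formal power series versus convergence are sound refinements, but the argument is the same as the paper's.
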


\begin{proof}
Let $\lambda_1,\lambda_2,...,\lambda_k$ be the eigenvalues of $A$, so that

\[\det(tI-A)=(t-\lambda_1)(t-\lambda_2)\ldots(t-\lambda_k)\]
Replacing $t$ by $t^{-1}$, we get

\[\det(t^{-1}I-A)=(t^{-1}-\lambda_1)(t^{-1}-\lambda_2)\ldots(t^{-1}-\lambda_k)\]
and, multiplying both sides by $t^k$, we get

\[t^k\det(t^{-1}I-A)=t^k(t^{-1}-\lambda_1)(t^{-1}-\lambda_2)\ldots(t^{-1}-\lambda_k)\]
\[\det(I-tA)=(1-\lambda_1 t)(1-\lambda_2 t)\ldots(1-\lambda_k t)\]
Besides, we have

\[\zeta_{\sigma_A}(t)=\exp\left(\sum_{n=1}^\infty\frac{N_n(\sigma_A)}{n}t^n\right)=\exp\left(\sum_{n=1}^\infty\frac{\tr(A^n)}{n}t^n\right)\]
Since the eigenvalues of $A^n$ are $\lambda_1^n,\lambda_2^n,...,\lambda_k^n$, we get $\tr (A^n)=\sum_{m=1}^k \lambda_m^n$. So,

\[\zeta_{\sigma_A}(t)=\exp\left(\sum_{n=1}^\infty\frac{\sum_{m=1}^k \lambda_m^n}{n}t^n\right)=\exp\left(\sum_{m=1}^k\left(\sum_{n=1}^\infty\frac{(\lambda_m t)^n}{n}\right)\right)\]
Moreover, since $\sum_{n=1}^\infty\frac{t^n}{n}=\log\left(\frac{1}{1-t}\right)$, we have

\[\zeta_{\sigma_A}(t)=\exp\left(\sum_{m=1}^k\log\left(\frac{1}{1-\lambda_m t}\right)\right)=\]
\[=\exp\left(\log\left(\prod_{m=1}^k\left(\frac{1}{1-\lambda_m t}\right)\right)\right)=\frac{1}{\prod_{m=1}^k(1-\lambda_m t)}=\frac{1}{\det(I-tA)}\]
\end{proof}

\paragraph{}

For instance, if $A=\left(\begin{array}{cc}1&1\\1&0\end{array}\right)$, its eigenvalues are $\lambda_1=\frac{1+\sqrt{5}}{2}$ and $\lambda_2=\frac{1-\sqrt{5}}{2}$, so

\[\zeta_{\sigma_A}(t)=\frac{1}{(1-\lambda_1 t)(1-\lambda_2 t)}=\frac{1}{1-t-t^2}\]

\paragraph{}

\begin{prop}
Let $A$ be an irreducible matrix with entries 0 or 1. Then the topological entropy of $\sigma_A$ is $-\log\rho$, where $\rho$ is the radius of convergence of $\zeta_{\sigma_A}$.
\end{prop}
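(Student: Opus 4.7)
The plan is to combine Theorem 2.1, which gives the rational formula $\zeta_{\sigma_A}(t)=1/\det(I-tA)$, with the Perron--Frobenius theorem and the cited fact (from Walters) that the topological entropy of $\sigma_A$ equals $\log\lambda$, where $\lambda$ is the dominant eigenvalue of $A$. The goal therefore reduces to identifying the radius of convergence $\rho$ with $1/\lambda$.

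Concretely, I would first factor
\[\det(I-tA)\;=\;\prod_{m=1}^k (1-\lambda_m t),\]
as is done in the proof of Theorem 2.1, so that $\zeta_{\sigma_A}$ extends to a rational function whose poles lie precisely at $\{1/\lambda_m:\lambda_m\neq 0\}$. Since a rational function with a single-valued Taylor expansion at $0$ has radius of convergence equal to the distance from $0$ to its nearest pole, $\rho$ equals $1/\max_m|\lambda_m|$. By the Perron--Frobenius property invoked earlier for irreducible non-negative matrices, this maximum is attained by the simple positive real eigenvalue $\lambda$, yielding $\rho=1/\lambda$ and hence $-\log\rho=\log\lambda$, which is the topological entropy of $\sigma_A$.

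An equivalent real-variable route avoids complex analysis altogether: from Proposition 2.1 we have $N_n(\sigma_A)=\tr(A^n)=\sum_{m=1}^k\lambda_m^n$, and since $\lambda$ dominates the other eigenvalues in absolute value, $\tr(A^n)/\lambda^n\to 1$, so $L=\limsup (1/n)\log N_n(\sigma_A)=\log\lambda$. Either approach then closes immediately via $\rho=e^{-L}$.

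The only delicate point is making sure that cancellation among the powers $\lambda_m^n$ does not mask the dominant contribution of $\lambda^n$ to $\tr(A^n)$; this is handled by the Perron--Frobenius statement recalled in the excerpt, which guarantees that $\lambda$ is a simple, non-negative, real eigenvalue attaining the spectral radius. Beyond that, the argument is essentially an unpacking of Theorem 2.1 and the cited entropy formula, so I expect no further obstacles.
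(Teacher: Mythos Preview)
Your primary argument is correct and essentially identical to the paper's own proof: factor $\det(I-tA)=\prod_m(1-\lambda_m t)$, identify the poles of $\zeta_{\sigma_A}$ as the reciprocals of the nonzero eigenvalues, conclude $\rho=1/\max_m|\lambda_m|=1/\lambda$ via Perron--Frobenius, and invoke the cited entropy formula $h(\sigma_A)=\log\lambda$. Your alternative real-variable route also works in the paper's framework, but note that the claimed convergence $\tr(A^n)/\lambda^n\to 1$ requires a strict spectral gap, which holds for primitive matrices but not for general irreducible ones (e.g.\ $A=\begin{pmatrix}0&1\\1&0\end{pmatrix}$ gives $\tr(A^n)=1+(-1)^n$); the $\limsup$ conclusion $L=\log\lambda$ is nonetheless correct, and the pole argument you lead with avoids this subtlety entirely.
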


\begin{proof}
In fact, since $\zeta_{\sigma_A}(t)=1/\det(I-tA)$ and

\[\det(I-tA)=0\Leftrightarrow\prod_{m=1}^k(1-\lambda_m t)=0\Leftrightarrow \exists m\in[k]:t=1/\lambda_m \wedge \lambda_m\neq 0\]
the radius of convergence of $\zeta_{\lambda_A}$ is

\[\rho=\min\left\{\left|1/\lambda_i\right|: i\in [k] \wedge \lambda_i\neq 0\right\}=1/\max\left\{\left|\lambda_i\right|: i\in [k] \wedge \lambda_i\neq 0\right\}=1/\lambda\]
Then $L=-\log\rho=\log\lambda$, and this value is precisely the topological entropy of $\sigma_A$.
\end{proof}

\noindent{\bf Remark}: However, there are closed invariant subsets of $\Sigma(k)$ for which the zeta function for the restriction of $\sigma$ to those sets is not rational. In fact:

\begin{itemize}

\item The set of rational functions defined in a neighborhood of zero of the form $\exp\left(\sum_{n=1}^\infty\frac{N_n}{n}t^n\right)$, with $N_n\in\ZZ,\forall n\in\NN$, is countable. In particular, the set of rational functions which are zeta functions for some restriction of $\sigma$ is countable.

\item There is a noncountable collection of closed invariant subsets of $\Sigma(k)$ such that the zeta function for the restriction of $\sigma$ to those sets is distinct from each other.

\end{itemize}

Therefore, there is a noncountable collection of closed invariant subsets of $\Sigma(k)$ such that the zeta function for the restriction of $\sigma$ to those sets is not rational. For example, let $k=2$ and $S\subseteq\Sigma(2)$ be the set whose elements are the sequences with only one '1' and the periodic sequences with at most one '1' in a minimal period. Then, $S$ is a closed invariant subset of $\Sigma(2)$. Also, the number of periodic points of period $n$ in $S$ is equal to the sum of the divisors of $n$, $\sigma(n)$, plus one, that is, $N_n(\sigma|_S)=\sigma(n)+1$ and hence,

\[\zeta_{\sigma|_S}(t)=\exp\left(\sum_{n=1}^\infty\frac{\sigma(n)+1}{n}t^n\right)=\exp\left(\sum_{n=1}^\infty\frac{\sigma(n)}{n}t^n+\sum_{n=1}^\infty\frac{t^n}{n}\right)=\]

\[=\exp\left(-\log(s(t))-\log(1-t)\right)=\frac{1}{(1-t)s(t)}\]
where $s(t)=1-t-t^2+t^5+t^7-t^{12}+t^{15}-\ldots$ is a power series with arbitrarily long sequences of coefficients equal to zero. Since $s(t)$ isn't rational, $\zeta_{\sigma|_S}$ is not rational as well.\\

\end{subsubsection}

\noindent\textbf{2.1.2}\hspace{.2in}\emph{\sc Expansive maps}

\begin{defi}

\em Let $(X,d)$ be a metric space and $f:X\rightarrow X$ a continuous map. We say that $\vep$ is an {\it expansive constant} for $f$ if

\[d(f^n(x),f^n(y))\leq\vep,\forall n\in\NN_0\Longrightarrow x=y\]
The map $f$ is called {\it expansive} if it has an {\it expansive constant}. If $f:X\rightarrow X$ is a homeomorphism, we say that $\vep$ is an expansive constant for $f$ (and $f$ is expansive) if

\[d(f^n(x),f^n(y))\leq\vep,\forall n\in\ZZ\Longrightarrow x=y\]

\end{defi}
This property ensures that the periodic points of $f$ of period $n$ are isolated and the sets $N_n(f)$ are finite, $\forall n\in\NN$ (see \cite{Walt}). Moreover

\begin{prop}
If $(X,d)$ is a compact metric space and $f:X\rightarrow X$ is expansive, then $\zeta_f$ has a positive radius of convergence.
\end{prop}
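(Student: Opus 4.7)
The plan is to show that expansiveness forces $N_n(f)$ to grow at most exponentially, which immediately gives $\rho = 1/\limsup\sqrt[n]{N_n(f)} > 0$.

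First I would fix an expansive constant $\vep$ for $f$. By compactness of $X$, there exists a finite open cover $\mathcal{U}=\{U_1,\ldots,U_N\}$ of $X$ whose members all have diameter strictly less than $\vep$ (for instance, balls of radius $\vep/2$ extracted from the open cover by all such balls). The integer $N$ depends only on $\vep$ and on $(X,d)$, not on $n$.

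Next, for each $n\in\NN$ I would consider the refinement
\[
\mathcal{U}^n=\Bigl\{U_{i_0}\cap f^{-1}(U_{i_1})\cap\cdots\cap f^{-(n-1)}(U_{i_{n-1}}):\,i_0,\ldots,i_{n-1}\in\{1,\ldots,N\}\Bigr\},
\]
which also covers $X$ and whose cardinality is at most $N^n$. The central claim is that each element of $\mathcal{U}^n$ contains at most one periodic point of period $n$. Suppose $x,y$ belong to the same element of $\mathcal{U}^n$ and satisfy $f^n(x)=x$, $f^n(y)=y$. Then for every $j\in\{0,\ldots,n-1\}$ the points $f^j(x)$ and $f^j(y)$ lie in the same $U_{i_j}$, so $d(f^j(x),f^j(y))<\vep$. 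For any $m\geq 0$ write $m=kn+j$ with $0\leq j<n$; periodicity gives $f^m(x)=f^j(x)$ and $f^m(y)=f^j(y)$, hence $d(f^m(x),f^m(y))<\vep$ for all $m\in\NN_0$. By expansiveness $x=y$ (and if $f$ is a homeomorphism, the same argument extends automatically to all $m\in\ZZ$).

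Consequently $N_n(f)\leq |\mathcal{U}^n|\leq N^n$, so
\[
\limsup_{n\to\infty}\sqrt[n]{N_n(f)}\leq N<\infty,
\]
which yields $\rho\geq 1/N>0$, as required. The only genuine input is the extraction of a finite cover of mesh below the expansive constant; once that is in hand, the counting step and the extension from $j<n$ to all $m$ via periodicity are routine, and there is no substantial obstacle to overcome.
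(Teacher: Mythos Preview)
Your proof is correct and follows essentially the same approach as the paper: both fix a finite cover of mesh below the expansive constant, observe that a period-$n$ point is determined by its itinerary through the cover over $n$ steps (using periodicity to extend the separation bound to all nonnegative times), and conclude $N_n(f)\leq N^n$. The only cosmetic difference is that the paper packages this via an injective coding map $\phi:X\to[N]^{\NN_0}$ sending periodic points to periodic sequences, whereas you work directly with the refined cover $\mathcal{U}^n$; the content is identical.
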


\begin{proof} Suppose that $f$ is a continuous map with expansive constant $\vep$. Let $U_1,\ldots,U_r$ be a cover of $X$ with $diam(U_i)\leq\vep,\forall i\in\left[r\right]$. For each $x\in X$, let $\phi(x)=(a_0,a_1,a_2,\ldots)$, with $a_n=\min\{i\in\left[r\right]:f^n(x)\in U_i\}$. We can see that $\phi(x)=\phi(y)\Rightarrow d(f^n(x),f^n(y))\leq\vep,\forall n\in\NN_0\Rightarrow x=y$, so $\phi$ is injective. Also, if $x$ is periodic with period $n$, then so is $\phi(x)$. Since the number of periodic points in $\left[r\right]^{\NN_0}$ with period $n$ is $r^n$, we have $N_n(f)\leq r^n$ and

\[L=\limsup(1/n)\log N_n(f)\leq\log r\Longrightarrow \rho\geq 1/r>0\]
If $f$ is a homeomorphism with expansive constant $\vep$, then the proof is similar (we associate to each point of $X$ an unique sequence in $\left[r\right]^{\ZZ}$, which is periodic if the point is periodic).
\end{proof}

\paragraph{}

Since, for each expansive map, there is some $r\in\NN$ such that $N_n(f)\leq r^n,\forall n\in\NN$, we may also deduce that

\begin{cor}
\[1-r\left|t\right|\leq\left|\zeta_f(t)\right|\leq \frac{1}{1-r\left|t\right|}\]
\end{cor}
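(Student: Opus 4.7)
The plan is to feed the bound $N_n(f)\leq r^n$ (extracted from the proof of the preceding proposition) directly into the defining series for $\zeta_f$ and then pass to the exponential using the identity $|e^z|=e^{\operatorname{Re}(z)}$.

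First I would observe that, writing $S(t):=\sum_{n=1}^\infty \frac{N_n(f)}{n}t^n$, the hypothesis $N_n(f)\leq r^n$ gives the termwise estimate
\[\left|S(t)\right|\;\leq\;\sum_{n=1}^\infty\frac{N_n(f)}{n}|t|^n\;\leq\;\sum_{n=1}^\infty\frac{(r|t|)^n}{n}\;=\;-\log(1-r|t|),\]
valid whenever $r|t|<1$. In particular the series defining $\zeta_f$ converges absolutely on the disk $|t|<1/r$, which is consistent with the radius of convergence bound $\rho\geq 1/r$ from the previous proposition.

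Next I would exploit the fact that for any complex number $z$ one has $|e^z|=e^{\operatorname{Re}(z)}$ and $-|z|\leq \operatorname{Re}(z)\leq |z|$. Applied to $z=S(t)$, together with the estimate above, this yields the two-sided inequality
\[\exp\bigl(\log(1-r|t|)\bigr)\;\leq\;|\zeta_f(t)|\;=\;e^{\operatorname{Re}(S(t))}\;\leq\;\exp\bigl(-\log(1-r|t|)\bigr),\]
which simplifies exactly to the claim $1-r|t|\leq |\zeta_f(t)|\leq 1/(1-r|t|)$ on the disk $|t|<1/r$. For $|t|\geq 1/r$ the lower bound is trivial since its left-hand side becomes non-positive while $|\zeta_f(t)|\geq 0$, and the upper bound is either outside the domain of definition of $\zeta_f$ or becomes vacuous ($1/(1-r|t|)$ not being a positive real), so no separate argument is required in the range where the statement is meaningful.

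There is really no main obstacle here: the only mild subtlety is to state clearly that the inequalities are to be read on the disk $|t|<1/r$ where both sides make sense, and to remember that it is the \emph{real part}, not the modulus, of $S(t)$ that appears in $|\zeta_f(t)|$, which is precisely what allows the lower bound to be obtained from the same absolute estimate that gave the upper bound.
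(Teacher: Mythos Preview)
Your proof is correct and follows essentially the same approach as the paper: both use $N_n(f)\le r^n$, the identity $|e^z|=e^{\operatorname{Re}(z)}$, and the series $\sum (r|t|)^n/n=-\log(1-r|t|)$ on $|t|<1/r$. The only cosmetic difference is that the paper bounds $\operatorname{Re}\!\left(\sum \frac{N_n(f)}{n}t^n\right)=\sum\frac{N_n(f)}{n}\operatorname{Re}(t^n)$ term by term via $-|t|^n\le\operatorname{Re}(t^n)\le|t|^n$, whereas you first bound $|S(t)|$ and then invoke $-|S(t)|\le\operatorname{Re}(S(t))\le|S(t)|$; these are equivalent manipulations.
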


\begin{proof}

\[\left|\zeta_f(t)\right|=
\left|\exp\left(\sum_{n=1}^\infty\frac{N_n(f)}{n}t^n\right)\right|=
\exp\left(\sum_{n=1}^\infty\frac{N_n(f)}{n}Re(t^n)\right)\leq\]

\[\leq\exp\left(\sum_{n=1}^\infty\frac{r^n}{n}\left|t^n\right|\right)=
\exp\left(\sum_{n=1}^\infty\frac{(r\left|t\right|)^n}{n}\right)=
\exp\left(\log\left(\frac{1}{1-r\left|t\right|}\right)\right)=
\frac{1}{1-r\left|t\right|}\]

and, similarly,

\[\left|\zeta_f(t)\right|=
\exp\left(\sum_{n=1}^\infty\frac{N_n(f)}{n}Re(t^n)\right)\geq
\exp\left(\sum_{n=1}^\infty\frac{r^n}{n}(-\left|t^n\right|)\right)=
1-r\left|t\right|\]

for all t with $\left|t\right|<1/r$ (recall that $\rho\geq 1/r$).

\end{proof}

\noindent\textbf{2.1.3}\hspace{.2in}{\emph{\sc Axiom A diffeomorphisms}}

\begin{defi}
\em Let $f$ be a $\mathcal{C}^1$ diffeomorphism defined on a manifold $M$. A subset $\Lambda\subseteq M$ is {\it hyperbolic} if it is compact, $f$-invariant ($f(\Lambda)=\Lambda$) and there is a decomposition $T_{\Lambda}M=E^s_{\Lambda}\oplus E^u_{\Lambda}$ such that
	
\[D_x f(E^s_x)=E^s_{f(x)},\forall x\in\Lambda\]
\[D_x f(E^u_x)=E^u_{f(x)},\forall x\in\Lambda\]
\[\exists c>0,\lambda\in\left]0,1\right[:\forall x\in\Lambda,\forall n\geq 0,\]

\begin{center}
$\left\|D_x f^n(v)\right\|\leq c\lambda^n\left\|v\right\|,\forall v\in E^s_x$ and $\left\|D_x f^{-n}(v)\right\|\leq c\lambda^n\left\|v\right\|,\forall v\in E^u_x$
\end{center}
\end{defi}

\paragraph{}

For each $x\in\Lambda$, these expanding and contracting subbundles are tangent to the stable and unstable submanifolds,

\[W^s(x)=\left\{ y\in M:d(f^n(x),f^n(y))\rightarrow 0\right\}\]
\[W^u(x)=\left\{ y\in M:d(f^{-n}(x),f^{-n}(y))\rightarrow 0\right\}\]
Besides, for small $\vep$, the local submanifolds

\[W^s_{\vep}(x)=\left\{ y\in M:d(f^n(x),f^n(y))<\vep,\forall n\geq 0\right\}\]
\[W^u_{\vep}(x)=\left\{ y\in M:d(f^{-n}(x),f^{-n}(y))<\vep,\forall n\geq 0\right\}\]
are $\mathcal{C}^1$ disks embedded in $M$ and there is $\delta>0$ such that, if the distance between two points $x$ and $y$ in $\Lambda$ is less then $\delta$, then $W^s_{\vep}(x)$ and $W^u_{\vep}(y)$ intersect transversely at an unique point, denoted by $[x,y]$.

In particular, if $y=x$ then $W^s_{\vep}(x)\cap W^u_{\vep}(x)=\{x\}$, which means that $\vep$ is an expansive constant for $f$ (see \cite{Shub}). We say that $\Lambda$ has a {\it local product structure} if $[x,y]\in\Lambda,\forall x, y\in\Lambda$.\\

If $f$ is a $\mathcal{C}^1$ diffeomorphism defined on a hyperbolic set with local product structure, then $f$ is expansive, so $N_n(f)<\infty,\forall n\in\NN$ and we can define the zeta function for $f$. And moreover, as proved in \cite{Shub},

\begin{theo}
The zeta function of a $\mathcal{C}^1$ diffeomorphism on a hyperbolic set with local product structure is rational.
\end{theo}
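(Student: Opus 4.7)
The plan is to reduce the problem to the subshift computation of Theorem 2.1.2 via a Markov partition of $\Lambda$. By the classical Bowen--Sinai theorem, a hyperbolic set $\Lambda$ with local product structure admits Markov partitions $\mathcal{R}=\{R_1,\ldots,R_k\}$ of arbitrarily small diameter, where each $R_i$ is a proper rectangle (closure of the interior, in the intrinsic topology, equal to $[\overline{R_i^u},\overline{R_i^s}]$) and the Markov property $f(W^u_{loc}(x)\cap R_i)\supseteq W^u_{loc}(f(x))\cap R_j$ holds whenever $x\in\mathrm{int}(R_i)$, $f(x)\in\mathrm{int}(R_j)$ (and the dual condition for $W^s$). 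From such a partition I would define the transition matrix $A\in M_k$ by $A_{ij}=1$ iff $f(\mathrm{int}(R_i))\cap\mathrm{int}(R_j)\neq\emptyset$, and build the semi-conjugacy
\[\pi:\Sigma_A\to\Lambda,\qquad \pi(\underline{a})=\bigcap_{n\in\ZZ}f^{-n}(R_{a_n}),\]
which is a continuous surjection intertwining $\sigma_A$ with $f$ (the diameters of the cylinders shrink to zero by expansiveness, making $\pi$ well defined).

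If $\pi$ were a bijection one would immediately get $N_n(f)=\tr(A^n)$ and $\zeta_f(t)=1/\det(I-tA)$ by the argument of Theorem 2.1.2. The obstruction is that orbits hitting the boundaries $\partial^s R_i\cup\partial^u R_i$ can have several symbolic preimages, so $\pi$ is only injective off a meager $f$-invariant set. The key observation, due to Manning, is that each stratum obtained by intersecting the boundaries is again a compact $f$-invariant hyperbolic set with local product structure, of strictly smaller ``dimension'' (in a topological sense), and therefore recursively admits its own Markov partition and its own transition matrix $A_j$. By a careful inclusion--exclusion over which rectangles a periodic point belongs to, one obtains a finite decomposition
\[N_n(f)=\sum_{j}\epsilon_j\,N_n(\sigma_{A_j})=\sum_{j}\epsilon_j\,\tr(A_j^n),\qquad \epsilon_j\in\{+1,-1\},\]
the alternating signs accounting for the overcounting of boundary orbits.

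Plugging this into the definition of the zeta function and repeating the eigenvalue manipulation from Theorem 2.1.2 term by term gives
\[\zeta_f(t)=\exp\left(\sum_{n=1}^\infty\sum_{j}\epsilon_j\frac{\tr(A_j^n)}{n}t^n\right)=\prod_{j}\det(I-tA_j)^{-\epsilon_j},\]
which is manifestly rational. The two genuine obstacles in this program are: (i) the construction of Markov partitions of arbitrarily small diameter, which is a substantial theorem and relies essentially on local product structure through the bracket $[\cdot,\cdot]$; and (ii) the combinatorial bookkeeping needed to justify the signed sum for $N_n(f)$, which requires verifying that each boundary stratum $\Lambda_I=\bigcap_{i\in I}R_i\cap\Lambda$ is itself hyperbolic with local product structure so that the induction closes. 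The analogous difficulties (under the name of finite covers with a Markov-type property) are precisely what the introduction announces will be developed in the Ruelle-expanding setting, where the absence of a stable direction should simplify the stratification considerably.
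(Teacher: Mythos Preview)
The paper does not actually prove this theorem; it simply cites Shub and remarks that ``the main ingredient of the classical argument \ldots\ is the existence of a Markov partition of arbitrarily small diameter, which establishes a codification of most of the orbits of $f$ through a subshift of finite type.'' So there is little to compare against beyond that one sentence, with which your outline is consistent at the coarse level: Markov partition $\Rightarrow$ semi-conjugacy with a subshift $\Rightarrow$ signed correction for overcounting $\Rightarrow$ rational $\zeta$.

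That said, your description of step (ii) contains a real gap. You propose to show that each ``boundary stratum'' $\Lambda_I=\bigcap_{i\in I}R_i\cap\Lambda$ is again a hyperbolic set with local product structure and then recurse, giving each stratum its own Markov partition and its own matrix $A_j$. This does not work: the sets $\Lambda_I$ are not $f$-invariant (a point in $R_i\cap R_j$ can be mapped to a point lying in a single rectangle), so they are not hyperbolic sets for $f$ in the required sense and no recursion of the kind you describe can be set up. The signed sum $N_n(f)=\sum_j\epsilon_j\tr(A_j^n)$ is correct in spirit, but the matrices $A_j$ are not transition matrices of smaller hyperbolic systems.

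The actual Manning--Bowen mechanism, which is precisely what the paper carries out in detail for the Ruelle-expanding case in Section~3, is different and more combinatorial: for each $r$ one forms the index set $I_r$ of $r$-element subsets $\{s_1,\ldots,s_r\}\subseteq[k]$ with $\bigcap_i R_{s_i}\neq\emptyset$, and defines a \emph{signed} transition matrix $B^{(r)}$ on $I_r$ whose $(s,t)$ entry is $\mathrm{sgn}(\mu)$ for the unique permutation $\mu$ matching the transitions (and $0$ if no such unique $\mu$ exists). One then proves directly, by an inclusion--exclusion over the cycle decomposition of the permutation that $\sigma^p$ induces on $\Pi^{-1}(x)$, that $N_p(f)=\sum_r(-1)^{r-1}\tr\bigl((B^{(r)})^p\bigr)$. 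No induction on ``smaller'' dynamical systems is involved; everything happens at the level of the single Markov partition of $\Lambda$. Your final formula $\zeta_f(t)=\prod_j\det(I-tA_j)^{-\epsilon_j}$ is then recovered with $A_j$ replaced by $B^{(r)}$ and $\epsilon_j=(-1)^{r-1}$.
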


As a consequence, if $f$ is a $\mathcal{C}^1$ diffeomorphism such that $\overline{Per(f)}$ is hyperbolic, then $\zeta_f(t)$ is a rational function: in fact, it is known that, if $\overline{Per(f)}$ is hyperbolic, then it has a local product structure; and $\zeta_f(t)=\zeta_{f|_{\overline{Per(f)}}}(t)$. In particular, if $f$ is Axiom A ($\Omega(f)$ is hyperbolic and $\Omega(f)=\overline{Per(f)}$, where $\Omega(f)$ denotes the set of non-wandering points of $f$), then $\zeta_f(t)$ is rational.\\

The main ingredient of the classical argument to prove this theorem is the existence of a Markov partition of arbitrarily small diameter, which establishes a codification of most of the orbits of $f$ through a subshift of finite type (for which we already know how to count the periodic points), and a sharp way to translate the properties of the zeta function from the subshift to the diffeomorphism setting.

\end{subsection}
\end{section}

\begin{section}{\sc Ruelle-expanding maps}

Here, we will explain the nature of another class of maps, called {\it Ruelle-expanding}, whose zeta function is rational.

\begin{defi}

\em Let $(K,d)$ be a compact metric space and $f:K\rightarrow K$ a continuous map. We say that $f$ is {\it Ruelle-expanding} if there are $r>0$, $0<\lambda<1$ and $c>0$ such that:

\begin{itemize}

\item $\forall x,y \in K, x\neq y \wedge f(x)=f(y)\Longrightarrow d(x,y)>c$

\item $\forall x \in K,\forall a \in f^{-1}(\{x\}),\exists \phi:B_r(x)\rightarrow K$ with

$\phi(x)=a$

$(f\circ\phi)(y)=y,\forall y \in B_r(x)$

$d(\phi(y),\phi(z))\leq \lambda d(y,z),\forall y,z\in B_r(x)$

\end{itemize}

\end{defi}

\paragraph{}

{\bf Examples}

\begin{itemize}

\item Let $M$ be a compact manifold and $f:M\rightarrow M$ a $\mathcal{C}^1$ map. We say that $f$ is {\it expanding} if $\exists\lambda\in\left]0,1\right[:\forall x\in M,\left\|D_x f(v)\right\|\geq 1/\lambda \left\|v\right\|$. It can be proved (see \cite{MC}) that, in this particular case, this condition is equivalent to the previous two from the last definition. So, $f$ is expanding if and only if it is Ruelle-expanding.

One example of such a map is the application\\ $\begin{array}{rcl}f:S^1&\rightarrow&S^1\\z&\mapsto&z^k\end{array}$, with $k\in\ZZ$ and $k>1$

(it is easy to see that $f$ is expanding, with $\lambda=1/k$). Notice that, for this map, we have $N_n(f)=k^n-1$. So,

\[
\zeta_f(t)=\exp\left(\sum_{n=1}^\infty\frac{k^n-1}{n}t^n\right)=\exp\left(\sum_{n=1}^\infty\frac{(kt)^n}{n}-\sum_{n=1}^\infty\frac{t^n}{n}\right)=
\]

\[
=\exp\left(\log\left(\frac{1}{1-kt}\right)-\log\left(\frac{1}{1-t}\right)\right)=\exp\left(\log\left(\frac{1-t}{1-kt}\right)\right)=\frac{1-t}{1-kt}
\]

which is a rational function (with a pole at $\frac{1}{k}$, so $\rho=\frac{1}{k}=\exp(-\log(k))=\exp(-h(f))$).

\item Let $\Sigma(k)^+$ be the product space $[k]^{\NN_0}$, whose elements are the sequences $\underline{a}=(a_0,a_1,\ldots)$, with $a_n\in [k],\forall n\in\NN_0$.
As its bilateral version, this space has a product topology which can be generated by the metric given by

\[d(\underline{a},\underline{b})=\sum_{n=0}^{\infty}\frac{\delta_n(\underline{a},\underline{b})}{2^n}\]

where $\delta_n(\underline{a},\underline{b})$ is $0$ when $a_n=b_n$ and $1$ otherwise. The \textit{unilateral shift} is the map of $\Sigma(k)^+$ given by

\[(\sigma^+(\underline{a}))_i=a_{i+1},i\in\NN_0\]

For each $A\in M_k$, we define $\Sigma_A^+=\{\underline{a}\in\Sigma(k)^+:A_{a_i a_{i+1}}=1\}$. The pair $(\Sigma_A^+,\sigma_A^+)$, where $\sigma_A^+=\sigma^+|_{\Sigma_A^+}$, is called a {\it unilateral subshift of finite type}. If $A$ is irreducible, then it is easy to see that $\sigma_A^+$ is Ruelle-expanding, with $r=1$ and $\lambda=c=1/2$, since:\\

- If $\underline{a}\neq\underline{b}$ and $\sigma_A^+(\underline{a})=\sigma_A^+(\underline{b})$, then $a_0\neq b_0$, so $d(\underline{a},\underline{b})\geq 1>c$.\\

- If $r=1$, then, for any $\underline{a}\in\Sigma_A^+$ we have $B_r(\underline{a})=\{\underline{b}\in\Sigma_A^+:b_0=a_0\}$ since, as we have seen, $b_0\neq a_0\Rightarrow d(\underline{a},\underline{b})\geq 1=r$. Also, the pre-images of $\underline{a}=(a_0,a_1,a_2,\ldots)$ are of the form $(x,a_0,a_1,\ldots)$, where $A_{x a_0}=1$ (there is at least one $x\in [k]$ such that $A_{x a_0}=1$ because $A$ is irreducible). If we define $\phi(\underline{b})=(x,b_0,b_1,b_2,\ldots)$ for $\underline{b}=(b_0,b_1,b_2,\ldots)\in B_r(\underline{a})$ (that is, with $a_0=b_0$), then $\sigma_A^+(\phi(\underline{b}))=\underline{b}$ and

\[d(\phi(\underline{b}),\phi(\underline{c}))=\sum_{n=1}^{\infty}\frac{\delta_{n-1}(\underline{b},\underline{c})}{2^n}=\sum_{n=0}^{\infty}\frac{\delta_n(\underline{b},\underline{c})}{2^{n+1}}=\frac{d(\underline{b},\underline{c})}{2}=\lambda d(\underline{b},\underline{c}),\forall\underline{b},\underline{c}\in B_r(\underline{a})\]

\end{itemize}

If, to simplify the notation, we denote by $\sigma$ the map $\sigma_A^+$, then $\underline{a}\in\Sigma_A^+$ is a fixed point of $\sigma^n$ if and only if $a_i=a_{i+n},\forall i\in\NN_0$. For each fixed point of $\sigma^n$ given by

\[
\underline{a}=(a_0,a_1,a_2,...,a_n,a_{n+1},a_{n+2},...)=(a_0,a_1,a_2,...,a_0,a_1,a_2,...)
\]
we can associate a unique admissible sequence of length $n+1$ given by $a_0 a_1 a_2 ... a_{n-1} a_0$. So, the number of fixed points of $\sigma^n$ is $N_n(\sigma)=\tr(A^n)$ and $\zeta_{\sigma}(t)=1/\det(I-tA)$, also a rational function (with poles at the inverses of the eigenvalues of $A$).

\begin{defi}

\em Let $f:K\rightarrow K$ be Ruelle-expanding and $S\subseteq K$. Given $n\in\NN$, we say that $g:S\rightarrow K$ is a {\it contractive branch} of $f^{-n}$ if

\begin{itemize}

	\item $(f^n\circ g)(x)=x, \forall x \in S$

	\item $d((f^j\circ g)(x),(f^j\circ g)(y))\leq \lambda^{n-j}d(x,y),\forall x,y\in S,j\in\{0,1,\ldots,n\}$

\end{itemize}

\end{defi}

It is easy to see (details in \cite{MC}) that, given $x\in K$ and $a\in f^{-n}(\{x\})$ for some $n\in\NN$, there is always a contractive branch $g:B_r(x)\rightarrow K$ of $f^{-n}$ with $g(x)=a$. Moreover,

\begin{prop}
Let $B(n,\vep,x)=\{y\in K : d(f^j(x),f^j(y))<\vep,\forall j\in\{0,\ldots,n\}\}$. There is some $\vep_0<r$ such that, for every $\vep$ with $0<\vep<\vep_0$, we have
\begin{itemize}
\item $\forall n\in\NN$, $B(n,\vep,x)=g(B_{\vep}(f^n(x)))$, where $g:B_r(f^n(x))\rightarrow K$ is a contractive branch of $f^{-n}$ with $g(f^n(x))=x$
\item $\vep$ is an expansive constant for $f$
\end{itemize}
\end{prop}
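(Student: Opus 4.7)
The plan is to take $\vep_0 = \min\{r, c/2\}$ (or, if one wants $\vep_0 < r$ strictly as the statement demands, any slightly smaller positive value) and to verify both bullets for each $0 < \vep < \vep_0$.

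For the first bullet, the inclusion $g(B_\vep(f^n(x))) \subseteq B(n, \vep, x)$ is immediate from the definition of a contractive branch: for $z \in B_\vep(f^n(x))$ and $y = g(z)$, one has
\[
d(f^j(y), f^j(x)) = d((f^j \circ g)(z), (f^j \circ g)(f^n(x))) \leq \lambda^{n-j} d(z, f^n(x)) < \vep
\]
for every $j \in \{0, \ldots, n\}$. For the reverse inclusion, given $y \in B(n, \vep, x)$, the point $f^n(y)$ lies in $B_\vep(f^n(x)) \subseteq B_r(f^n(x))$, so $y' := g(f^n(y))$ is defined; applying the easy direction just proved to $z = f^n(y)$ gives $d(f^j(y'), f^j(x)) < \vep$ for each $j$, and combining with the hypothesis $d(f^j(y), f^j(x)) < \vep$ yields $d(f^j(y), f^j(y')) < 2\vep \leq c$. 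I then proceed by downward induction on $j$: from $f^n(y) = f^n(y')$, if $f^{j+1}(y) = f^{j+1}(y')$ then $f^j(y)$ and $f^j(y')$ are preimages of a common point lying at distance $\leq c$, so the first Ruelle-expanding axiom (in contrapositive form) forces $f^j(y) = f^j(y')$. Taking $j = 0$ gives $y = y' = g(f^n(y))$, as required.

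For the second bullet, suppose $d(f^n(x), f^n(y)) \leq \vep$ for every $n \geq 0$. The same downward-induction argument (or an appeal to the first bullet with any $\vep' \in (\vep, \vep_0)$, noting $2\vep \leq c$ still forces the separation contradiction) provides, for each $n$, the contractive branch $g_n$ of $f^{-n}$ with $g_n(f^n(x)) = x$ and $g_n(f^n(y)) = y$. Contractivity then yields $d(x, y) \leq \lambda^n d(f^n(x), f^n(y)) \leq \lambda^n \vep$, so letting $n \to \infty$ forces $x = y$.

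The main obstacle is the reverse inclusion in the first bullet: one must prove that the dynamical Bowen ball $B(n, \vep, x)$ is not larger than the image of the ambient $\vep$-ball under the contractive branch. This is precisely where the separation-of-preimages condition $d(\cdot, \cdot) > c$ is invoked, and it is what forces the choice $\vep_0 \leq c/2$; without it, one could have accompanying preimage sequences $f^j(y')$ staying near the reference orbit without coinciding with it.
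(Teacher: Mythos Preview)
Your proof is correct. The paper itself does not prove this proposition but merely cites the reference \cite{MC}; you have supplied a complete, self-contained argument where the paper gives none. The choice $\vep_0 = \min\{r, c/2\}$ (or any strictly smaller positive value) is exactly right, the downward-induction step using the separation axiom $d(u,v) > c$ for distinct $f$-preimages is the key mechanism, and your derivation of expansiveness from the first bullet via the contraction estimate $d(x,y) \leq \lambda^n \vep$ is clean. The only cosmetic point is that $2\vep < c$ is strict (since $\vep < \vep_0 \leq c/2$), so your ``$\leq c$'' can be sharpened to ``$< c$'', though this does not affect the logic.
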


\begin{proof} See \cite{MC}.
\end{proof}

\begin{prop}
$K=\bigcup_{n\geq 0}f^{-n}(\overline{Per(f)})$, where $Per(f)$ is the set of periodic points for $f$. In particular, $Per(f)\neq\emptyset$.
\end{prop}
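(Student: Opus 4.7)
The central tool is a closing lemma provided by iterating the contractive branches: if $y\in K$ and $N\in\NN$ satisfy $d(f^N(y),y)<\eta$ with $\eta<(1-\lambda)r/2$, then there is a periodic point $p$ of period dividing $N$ with $d(p,y)\leq \lambda^N\eta/(1-\lambda^N)$. Indeed, composing $N$ contractive branches produces $g:B_r(f^N(y))\to K$ with $g(f^N(y))=y$, $f^N\circ g=\mathrm{id}$ and Lipschitz constant at most $\lambda^N$; the bound on $\eta$ makes $g$ send the closed (and hence complete) ball $\overline{B_{r/2}(f^N(y))}\cap K$ into itself, so Banach's fixed point theorem yields $p=g(p)$, whence $f^N(p)=p$, and the distance estimate follows from the contraction plus the triangle inequality.

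Starting from this, for any $x\in K$ I would first extract a uniformly recurrent point $z\in\omega(x)$ via Zorn's lemma (applied inside the compact forward-invariant set $\omega(x)$ to produce a minimal closed invariant subset). Uniform recurrence of $z$ means the closing lemma applies with $\eta\to 0$, giving periodic points accumulating at $z$, so $z\in\overline{Per(f)}$; in particular $Per(f)\neq\emptyset$, which already takes care of the final assertion of the proposition. To place some $f^n(x)$ itself into $\overline{Per(f)}$, I would apply the closing lemma directly along the forward orbit of $x$: for $\eta_k\to 0$, compactness yields $m_k<n_k$ with $d(f^{m_k}(x),f^{n_k}(x))<\eta_k$, and the lemma produces periodic points $p_k$ of period $n_k-m_k$ with $d(p_k,f^{m_k}(x))\to 0$. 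If the indices $m_k$ can be chosen bounded, a subsequence makes them constant equal to some $m_0$, forcing $p_k\to f^{m_0}(x)$ and so $f^{m_0}(x)\in\overline{Per(f)}$.

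The main obstacle is the alternative case $m_k\to\infty$: one only knows that $\lim f^{m_k}(x)=y\in\omega(x)\cap\overline{Per(f)}$, i.e.\ the orbit approaches $\overline{Per(f)}$ rather than obviously landing in it. To close this gap I would use a density-transfer argument based on the contractive branches of $f^{-m_k}$ anchored at $f^{m_k}(x)\mapsto x$: each periodic point $q$ sufficiently close to $y$ pulls back through these $\lambda^{m_k}$-Lipschitz branches to a preperiodic point close to $x$, and a careful matching of the indices $m_k$ with the periods of the available $q$ should upgrade these to honest periodic points accumulating at some specific $f^n(x)$, forcing $f^n(x)\in\overline{Per(f)}$. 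Controlling this bookkeeping—so that the orbit demonstrably enters $\overline{Per(f)}$ in finite time rather than merely approaching it—is the least routine step and the one I expect to be the main obstacle.
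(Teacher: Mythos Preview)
The paper gives no argument here; it simply refers to Craizer's notes \cite{MC}. So there is nothing in the paper to compare your approach against, and I will just assess the proposal on its own.

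Your closing lemma is correct and is exactly the right tool. The Banach fixed-point construction you describe does produce, from any $y$ with $d(f^N(y),y)<\eta<(1-\lambda)r/2$, a periodic point $p$ of period dividing $N$ with $d(p,y)\le \lambda^N\eta/(1-\lambda^N)$. The deduction $Per(f)\neq\emptyset$ from this is fine.

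The ``main obstacle'' you flag, however, is self-inflicted. It appears only because you let $\eta_k\to 0$: that is what forces the base times $m_k$ to drift to infinity and leaves you with an orbit that merely approaches $\overline{Per(f)}$. You do not need $\eta_k\to 0$ at all, because your closing estimate already improves as $N\to\infty$ for \emph{fixed} $\eta$. So fix one $\eta<(1-\lambda)r/2$, cover $K$ by finitely many balls of radius $\eta/2$, and apply the pigeonhole principle to the forward orbit of $x$: some ball is visited at infinitely many times $m_1<m_2<\cdots$. Set $y=f^{m_1}(x)$ and $N_j=m_j-m_1\to\infty$; then $d(f^{N_j}(y),y)<\eta$ for every $j$, and your own lemma yields periodic $p_j$ with
\[
d(p_j,y)\le \frac{\lambda^{N_j}\eta}{1-\lambda^{N_j}}\xrightarrow[j\to\infty]{}0.
\]
Hence $f^{m_1}(x)\in\overline{Per(f)}$, i.e.\ $x\in f^{-m_1}(\overline{Per(f)})$, which is exactly what is needed. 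With this observation the Zorn/minimal-set detour is unnecessary, and the ``density-transfer'' step you worry about never arises. (That step, as you sketch it, would in any case only produce pre-periodic points accumulating at $x$, showing that $\bigcup_n f^{-n}(\overline{Per(f)})$ is dense rather than equal to $K$; the ``matching of indices'' you hope will upgrade pre-periodic to periodic has no obvious mechanism.)
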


\begin{proof} See \cite{MC}.
\end{proof}

Notice that, since $f$ is expansive, we can consider the zeta function for $f$ and, as $Per(f)\neq\emptyset$, given $x\in K$ with $f^k(x)=x$ for some $k\in\NN$, we have $f^{nk}(x)=x$ and $N_{nk}(f)\geq 1,\forall n\in\NN$, which implies that $L\geq\limsup\frac{1}{nk}\log N_{nk}(f)\geq 0$ and $\rho\leq 1$.\\

Is there any relation between $L$ and $h(f)$ if $f$ is Ruelle-expanding? In fact, we have that $L\leq h(f)$ but, to prove it, we need to simplify the calculus of $h(f)$. Let us first recall briefly how to evaluate, in general, this number.

Given a metric space $(X,d)$ and a uniformly continuous map $f:X\rightarrow X$, for every $n\in\NN$, we define a dynamical metric $d_n$ on $X$ by

\[d_n(x,y)=\max\{d(f^i(x),f^i(y)),i\in\{0,1,\ldots,n-1\}\}\]
and the corresponding open dynamical ball, with center $x$ and radius $r$,

\[B(n-1,r,x)=\{y\in K : d(f^j(x),f^j(y))< r,\forall j\in\{0,\ldots,n-1\}\}=\bigcap_{i=0}^{n-1}f^{-i}(B_r(f^i(x)))\]
and closed dynamical ball

\[\overline{B}(n-1,r,x)=\{y\in K : d(f^j(x),f^j(y))\leq r,\forall j\in\{0,\ldots,n-1\}\}=\bigcap_{i=0}^{n-1}f^{-i}(\overline{B}_r(f^i(x)))\]
Accordingly,

\begin{defi}
\em Let $n\in\NN$, $\vep>0$ and $K$ be a compact subset of $X$. Given a subset $F$ of $X$, we say that $F$ $(n,\vep)-$ {\it spans} $K$ with respect to $f$ if
\[\forall x\in K,\exists y\in F:d_n(x,y)\leq\vep\]
or, equivalently,
\[K\subseteq\bigcup_{y\in F}\overline{B}(n-1,\vep,y)\]
\end{defi}

\begin{defi}
\em Let $n\in\NN$, $\vep>0$ and $K$ be a compact subset of $X$. We define $r_n(\vep,K)$ as the smallest cardinality of any $(n,\vep)$ spanning set for $K$ with respect to $f$.
\end{defi}

Notice that, since $K$ is compact, we have $r_n(\vep,K)<\infty$; and $\vep_1<\vep_2\Longrightarrow r_n(\vep_1,K)\geq r_n(\vep_2,K)$.

\begin{defi}
\em Let $\vep>0$ and $K$ be a compact subset of $X$. Then
\[r(\vep,K)=r(\vep,K,f)=\limsup_{n\rightarrow\infty}(1/n)\log r_n(\vep,K)\]
\end{defi}

\begin{defi}
\em If for each compact subset $K$ of $X$ we denote by $h(f,K)$ the limit $\lim_{\vep\rightarrow 0} r(\vep,K,f)$, then the \textit{topological entropy} of $f$ is
$h(f)=\sup\{h(f,K),K$ compact subset of $X\}$.
\end{defi}

Sometimes it is useful to use an equivalent way of defining topological entropy which uses \emph{separated sets} instead of spanning ones.

\begin{defi}
\em Let $n\in\NN$, $\vep>0$ and $K$ be a compact subset of $X$. Given a subset $E$ of $K$, we say that \textit{$E$ is $(n,\vep)$ \textit{separated with respect to} $f$} if
\[\forall x,y\in E,d_n(x,y)\leq\vep \Longrightarrow x=y\]
or, equivalently,
\[\forall x\in E,\overline{B}(n-1,\vep,x)=\{x\}\]
\end{defi}

\begin{defi}
\em Let $n\in\NN$, $\vep>0$ and $K$ be a compact subset of $X$. We define $s_n(\vep,K)$ as the largest cardinality of any $(n,\vep)$ separated set for $K$ with respect to $f$.
\end{defi}

Observe that $r_n(\vep,K) \leq s_n(\vep,K) \leq r_n(\vep/2,K)$ and so, since $r_n(\vep/2,K)<\infty$, we have $s_n(\vep,K)<\infty$; besides, $\vep_1<\vep_2\Longrightarrow s_n(\vep_1,K)\geq s_n(\vep_2,K).$

\begin{defi}
\em Let $\vep>0$ and $K$ be a compact subset of $X$. We define
\[s(\vep,K)=s(\vep,K,f)=\limsup_{n\rightarrow\infty}(1/n)\log s_n(\vep,K)\]
\end{defi}

As a consequence of the previous inequalities, we get $r(\vep,K) \leq s(\vep,K) \leq r(\vep/2,K)$ and so (see \cite{Walt})\\

\begin{prop}

\begin{itemize}
\item[(a)] For any compact subset $K$ of $X$, we have $h(f,K)=\lim_{\vep\rightarrow 0} s(\vep,K).$
\item[(b)] $h(f)=\sup_K h(f,K)=\sup_K \lim_{\vep\rightarrow 0} s(\vep,K,f).$
\item[(c)] In case $X$ is compact, then

$h(f)=h(f,X)=\lim_{\vep\rightarrow 0}\limsup(1/n)\log r_n(\vep,X)=\lim_{\vep\rightarrow 0}\limsup(1/n)\log s_n(\vep,X)$.
\end{itemize}

\end{prop}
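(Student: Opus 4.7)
The plan is to derive all three parts from the sandwich inequality $r_n(\vep,K) \leq s_n(\vep,K) \leq r_n(\vep/2,K)$ already observed just above the statement, by first passing to $\limsup$ in $n$ and then to limits in $\vep$. As a preliminary, I would record that $r_n(\vep,K)$ and $s_n(\vep,K)$ are both non-increasing in $\vep$ (as noted in the excerpt), so $r(\vep,K)$ and $s(\vep,K)$ are non-increasing as well, and therefore the limits $\lim_{\vep\rightarrow 0^+}r(\vep,K)$ and $\lim_{\vep\rightarrow 0^+}s(\vep,K)$ exist in $[0,+\infty]$ as monotone limits. This guarantees that $h(f,K)$ and the quantity in part (a) are well-defined.

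For part (a), applying $\limsup_{n\rightarrow\infty}(1/n)\log$ termwise to the three-part inequality yields
\[
r(\vep,K) \;\leq\; s(\vep,K) \;\leq\; r(\vep/2,K).
\]
Letting $\vep\rightarrow 0^+$, both outer bounds tend to $h(f,K)$ (the right one via the change of variables $\vep'=\vep/2$), so the squeeze forces $\lim_{\vep\rightarrow 0^+}s(\vep,K)=h(f,K)$.

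Part (b) is then immediate: take $\sup_K$ on both sides of the identity in (a). For part (c), the key extra observation is that when $X$ itself is compact and $K\subseteq X$ is compact, any $F\subseteq X$ that $(n,\vep)$-spans $X$ automatically $(n,\vep)$-spans $K$, so $r_n(\vep,K)\leq r_n(\vep,X)$ and hence $h(f,K)\leq h(f,X)$. The supremum in (b) is therefore attained at $K=X$, giving $h(f)=h(f,X)$; the remaining two equalities in (c) follow by unfolding the definition $h(f,X)=\lim_{\vep\rightarrow 0}r(\vep,X)$ and applying part (a) with $K=X$ to replace $r_n$ by $s_n$ in the last slot.

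I do not anticipate a substantive obstacle: once the termwise chain $r_n(\vep,K)\leq s_n(\vep,K)\leq r_n(\vep/2,K)$ is in hand, everything is a routine monotone squeeze. The one subtlety to be careful about is that $r(\vep,K)$ may equal $+\infty$, but the inequalities and the monotone-convergence step are valid in the extended reals $[0,+\infty]$, so nothing breaks.
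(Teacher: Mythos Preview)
Your proof is correct and follows exactly the route the paper indicates: the paper records the sandwich $r(\vep,K)\leq s(\vep,K)\leq r(\vep/2,K)$ immediately before the proposition and then simply cites Walters for the conclusion, so you have supplied precisely the details the paper defers to that reference.
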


Let us now go back to Ruelle-expanding maps.

\begin{prop}
If $f:X\rightarrow X$ is a Ruelle-expanding map of a compact metric space $(X,d)$, then $h(f)=r(\vep_0,X)=s(\vep_0,X)$ for all $\vep_0<\vep/4$, where $\vep$ is an expansive constant for $f$.
\end{prop}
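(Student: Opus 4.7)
The plan is to reduce the claim to proving the single inequality $r(\delta, X) \leq r(\vep_0, X)$ for every $\delta \in (0, \vep_0)$. Together with the trivial comparison $r_n(\vep_0, X) \leq s_n(\vep_0, X) \leq r_n(\vep_0/2, X)$, and in view of the formula $h(f) = \lim_{\delta \to 0} r(\delta, X) = \lim_{\delta \to 0} s(\delta, X)$ from the previous proposition, that single inequality will force $h(f) = r(\vep_0, X)$; applying the same inequality at the scale $\vep_0/2$ (which still satisfies $\vep_0/2 < \vep/4$) will then sandwich $s(\vep_0, X)$ between $r(\vep_0, X)$ and $r(\vep_0/2, X)$, both equal to $h(f)$.

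The key lemma will be the following: for each $\delta \in (0, \vep_0)$ there exists an integer $K = K(\delta, \vep_0, \lambda)$, independent of $n$, such that every $(n, \vep_0)$-spanning set for $X$ is automatically an $(n-K, \delta)$-spanning set. Here is where Ruelle-expandingness enters: given $x \in X$ and $y$ with $d_n(x, y) \leq \vep_0$, the bound $\vep_0 < \vep/4$ leaves enough slack to place $y$ inside the open dynamical ball $B(n-1, 2\vep_0, x)$, with $2\vep_0 < \vep/2 < \vep$ still below the threshold at which the dynamical-ball formula of the earlier proposition holds. That formula identifies $B(n-1, 2\vep_0, x)$ with the image of the contractive branch $g : B_r(f^{n-1}(x)) \to X$ of $f^{-(n-1)}$ satisfying $g(f^{n-1}(x)) = x$, so $y = g(f^{n-1}(y))$. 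The Lipschitz estimate built into the definition of a contractive branch then gives
\[
d(f^i(x), f^i(y)) \leq \lambda^{n-1-i}\, d(f^{n-1}(x), f^{n-1}(y)) \leq \lambda^{n-1-i}\,\vep_0,
\]
so choosing $K$ minimal with $\lambda^K \vep_0 \leq \delta$ forces the right-hand side to be at most $\delta$ for every $0 \leq i \leq n-1-K$, which is precisely the $(n-K,\delta)$-spanning condition.

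From the key lemma, $r_{n-K}(\delta, X) \leq r_n(\vep_0, X)$ for every $n > K$. Taking $\limsup$ in $n$ and absorbing the fixed shift $K$ (since $(n-K)/n \to 1$) yields $r(\delta, X) \leq r(\vep_0, X)$. Letting $\delta \to 0$ and applying the preceding proposition gives $h(f) \leq r(\vep_0, X)$, which combined with the trivial reverse inequality forces $h(f) = r(\vep_0, X)$. Re-running the argument at scale $\vep_0/2$ gives $r(\vep_0/2, X) = h(f)$, and the sandwich from the first paragraph then delivers $s(\vep_0, X) = h(f)$.

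The main technical obstacle is the mismatch between the closed-inequality convention $d_n(x, y) \leq \vep_0$ built into the definition of spanning sets and the open-ball convention used in the identity $B(n, \vep, x) = g(B_\vep(f^n(x)))$ of the earlier proposition. The generous margin $\vep_0 < \vep/4$ is precisely the room needed to pass to a slightly larger scale $2\vep_0 < \vep/2 < \vep$ (still within the regime where the branch formula holds) and apply the contractive-branch description there; everything else in the argument is a routine geometric-series calculation and a $\limsup$ manipulation.
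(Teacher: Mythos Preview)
Your argument is correct and in fact supplies a proof where the paper gives none: the paper simply cites Walters' textbook proof for expansive homeomorphisms and remarks that it adapts to (positively) expansive maps. That classical argument uses only expansiveness plus compactness --- one shows by a contradiction/compactness argument that for every $\delta>0$ there is $K$ such that $d(f^i(x),f^i(y))\leq 2\vep_0$ for $0\leq i\leq K$ forces $d(x,y)\leq\delta$, and from there the $(n,\vep_0)$-spanning $\Rightarrow$ $(n-K,\delta)$-spanning step and the $\limsup$ manipulation proceed exactly as in your write-up. Your route instead exploits the Ruelle-expanding structure directly, invoking the contractive-branch description of dynamical balls to get the same key lemma with an explicit $K$ (of order $\log(\vep_0/\delta)/\log(1/\lambda)$) and a hard contraction estimate in place of the soft compactness step. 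The tradeoff is scope: Walters' argument yields the statement for \emph{every} expansive constant $\vep$, whereas yours tacitly requires $\vep$ to lie below the threshold of the earlier branch-formula proposition (so that the scale $2\vep_0$ is still admissible there). This is harmless in context, since such expansive constants exist by that very proposition and the downstream corollary only needs the result at some scale.
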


\begin{proof}
See \cite{Walt}. (Although the proof is for expansive homeomorphisms, it can be easily adapted for expansive maps.)
\end{proof}

\begin{cor}
For any Ruelle-expanding map we have $L\leq h(f)$, that is, the radius of convergence of the zeta function is $\rho\geq \exp(-h(f))$.
\end{cor}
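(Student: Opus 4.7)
The plan is to bound $N_n(f)$ by the separation number $s_n(\vep_0,X)$ for a suitably small $\vep_0$, and then invoke the preceding proposition that identifies $s(\vep_0,X)$ with $h(f)$ at small scales.

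First, fix an expansive constant $\vep$ for $f$ (which exists by the proposition characterising $B(n,\vep,x)$) and choose $\vep_0$ with $0<\vep_0<\vep/4$. Since any positive number smaller than an expansive constant is itself an expansive constant, $\vep_0$ is expansive for $f$ as well. The key observation is then that the set $\mathrm{Fix}(f^n)$ of periodic points of period $n$ is $(n,\vep_0)$-separated. Indeed, suppose $x,y\in\mathrm{Fix}(f^n)$ satisfy $d_n(x,y)\le\vep_0$, i.e. $d(f^i(x),f^i(y))\le\vep_0$ for $i=0,\ldots,n-1$. For an arbitrary $i\ge 0$, writing $i=qn+s$ with $0\le s<n$ and using $f^n(x)=x$, $f^n(y)=y$ gives $d(f^i(x),f^i(y))=d(f^s(x),f^s(y))\le\vep_0$. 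Expansiveness then forces $x=y$.

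Consequently $N_n(f)=\#\mathrm{Fix}(f^n)\le s_n(\vep_0,X)$ for every $n\in\NN$, and taking $\limsup(1/n)\log$ of both sides yields
\[
L=\limsup_{n\to\infty}\frac{1}{n}\log N_n(f)\;\le\;\limsup_{n\to\infty}\frac{1}{n}\log s_n(\vep_0,X)\;=\;s(\vep_0,X).
\]
Finally, because $\vep_0<\vep/4$, the previous proposition gives $s(\vep_0,X)=h(f)$, and therefore $L\le h(f)$, i.e. $\rho=e^{-L}\ge e^{-h(f)}$.

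I do not anticipate a genuine obstacle: the argument is almost entirely bookkeeping around the definitions. The only delicate point is the passage from ``$d(f^i(x),f^i(y))\le\vep_0$ on $\{0,\ldots,n-1\}$'' to the full orbit estimate needed for expansiveness, which hinges on exploiting the period $n$ of $x$ and $y$; once that is observed, the bound $N_n(f)\le s_n(\vep_0,X)$ and the cited identification $s(\vep_0,X)=h(f)$ do all the remaining work.
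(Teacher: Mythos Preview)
Your proof is correct and follows essentially the same route as the paper: show that the set of period-$n$ points is $(n,\vep_0)$-separated by extending the estimate on $\{0,\ldots,n-1\}$ to all of $\NN_0$ via periodicity, then apply the identification $s(\vep_0,X)=h(f)$ for $\vep_0<\vep/4$. The only cosmetic difference is that the paper bounds $d(f^i(p),f^i(q))$ by the original expansive constant $\vep$ (using $\vep_0\le\vep$) rather than first observing that $\vep_0$ is itself expansive, but this changes nothing of substance.
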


\begin{proof}
Let $p$ and $q$ be periodic points of $f$, with $f^n(p)=p$ and $f^n(q)=q$ for some $n\in\NN$. Then, we have

\[d_n(p,q)\leq\vep_0\Longrightarrow d_n(p,q)\leq\vep\Longrightarrow d(f^i(p),f^i(q))\leq\vep,\forall i\in\{0,1,\ldots,n-1\}\Longrightarrow\] \[\Longrightarrow d(f^i(p),f^i(q))\leq\vep,\forall i\in\NN_0\Longrightarrow p=q\]

So, the set $P_n$ of periodic points $p$ with $f^n(p)=p$ is a $(n,\vep_0)$ separated set for $X$ and $s_n(\vep_0,X)\geq card(P_n)=N_n(f)$. Consequently,

\[L=\limsup(1/n)\log N_n(f)\leq\limsup(1/n)\log s_n(\vep_0,X)=s(\vep_0,X)=h(f)\]

\end{proof}

This yields a link between $h(f)$ and the number of pre-images of the points in $X$ by $f$.

\begin{lem}
If $(X,d)$ is a compact metric space and $f:X\rightarrow X$ is a Ruelle-expanding map, then there is a $k\in\NN$ such that $card(f^{-1}(\{x\}))\leq k,\forall x\in X$.
\end{lem}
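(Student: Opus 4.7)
The plan is to use the first Ruelle-expanding condition (distinct preimages of the same point are $c$-separated) together with compactness of $X$. The intuition is simple: preimages of any single point form a $c$-separated set, and in a compact metric space all $c$-separated sets have cardinality bounded by a universal constant.

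More concretely, I would proceed as follows. First, fix the constant $c > 0$ supplied by the first bullet in the definition of Ruelle-expanding, so that for any $x \in X$ and any two distinct $a_1, a_2 \in f^{-1}(\{x\})$ we have $d(a_1, a_2) > c$. Next, using compactness of $X$, cover $X$ by finitely many open balls $B_{c/2}(x_1), \ldots, B_{c/2}(x_k)$ of radius $c/2$; let $k$ be the cardinality of this finite cover. Then observe that any two points lying in a common ball $B_{c/2}(x_i)$ are at distance strictly less than $c$ (by the triangle inequality). Consequently, each such ball contains at most one element of $f^{-1}(\{x\})$, and since these balls cover $X$ we conclude $\text{card}(f^{-1}(\{x\})) \leq k$. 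The bound $k$ does not depend on $x$, which is exactly what we need.

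There is no real obstacle here beyond correctly invoking compactness; the one subtlety to be careful about is that the separation constant $c$ is uniform in $x$ (it comes from the definition, not from the point), which is why the same cover works for every $x \in X$. One could equivalently phrase the argument via a maximal $c$-separated set in $X$ (which is automatically finite by compactness and serves as an upper bound), but the finite-cover formulation is cleaner and gives an explicit $k$.
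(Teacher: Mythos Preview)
Your proof is correct and follows essentially the same approach as the paper: both observe that $f^{-1}(\{x\})$ is a $c$-separated set (from the first Ruelle-expanding condition) and then invoke compactness to bound its cardinality uniformly. The paper phrases this using the entropy notation $s_1(c,X)$ introduced earlier, while you unpack the same bound directly via a finite cover by balls of radius $c/2$; the content is identical.
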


\begin{proof}
If we set $E=f^{-1}(\{x\})$ then we have $f(u)=f(v)=x,\forall u,v\in E,u\neq v$, so $d_1(u,v)=d(u,v)>c$ and $E$ is a $(1,c)$ separated set. Since $card(E)\leq s_1(c,X)<\infty$, we can take $k=s_1(c,X)$.
\end{proof}

\begin{prop}
$h(f)\leq \log(k)$, with equality if $card(f^{-1}(\{x\}))=k,\forall x\in X$.
\end{prop}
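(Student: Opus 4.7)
The plan is to establish the upper bound via a shadowing argument using contractive branches and the lower bound (under the equality assumption) by showing that full preimage sets form separated sets.

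For the upper bound, fix some $\vep_0 < \min(\vep/4, r)$, where $\vep$ is an expansive constant, so that $h(f) = r(\vep_0, X)$ by the preceding proposition. Cover $X$ by finitely many balls $B_{\vep_0}(y_1), \ldots, B_{\vep_0}(y_N)$. Given $x \in X$, set $x_i = f^i(x)$ and choose $y_{n-1}$ among the centers with $d(x_{n-1}, y_{n-1}) < \vep_0$. Then proceed by descending induction: having produced $y_{i+1}$ with $d(y_{i+1}, x_{i+1}) < \lambda^{n-2-i} \vep_0 < r$, invoke the contractive branch $\phi: B_r(x_{i+1}) \to K$ with $\phi(x_{i+1}) = x_i$ and define $y_i := \phi(y_{i+1})$, which satisfies $f(y_i) = y_{i+1}$ and $d(y_i, x_i) \leq \lambda \cdot d(y_{i+1}, x_{i+1}) < \lambda^{n-1-i} \vep_0$. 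The resulting $y_0$ satisfies $d_n(x, y_0) < \vep_0$. The crucial point is that although the branch used at each step depends on $x$, each $y_i$ must be a preimage of $y_{i+1}$, and by the preceding lemma there are at most $k$ such preimages. Hence the set of all $y_0$'s produced by this recipe has cardinality at most $N \cdot k^{n-1}$, so $r_n(\vep_0, X) \leq N k^{n-1}$, which gives $h(f) = r(\vep_0, X) \leq \log k$.

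For the equality case, assume $\mathrm{card}(f^{-1}(\{x\})) = k$ for every $x$, so that $\mathrm{card}(f^{-(n-1)}(\{x_0\})) = k^{n-1}$ for any fixed $x_0 \in X$. The claim is that this preimage set is $(n, \vep_0)$-separated whenever $\vep_0 < \min(c, \vep/4)$. Indeed, given distinct $u, v$ in this set, let $j$ be the smallest index for which $f^j(u) = f^j(v)$; then $j \geq 1$, $f^{j-1}(u) \neq f^{j-1}(v)$, and these points share an $f$-image, so the first defining property of a Ruelle-expanding map forces $d(f^{j-1}(u), f^{j-1}(v)) > c$. Thus $d_n(u, v) > c > \vep_0$, and we obtain $s_n(\vep_0, X) \geq k^{n-1}$, yielding $h(f) = s(\vep_0, X) \geq \log k$, which combined with the upper bound gives equality.

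The main delicate point will be the upper bound: specifically, justifying that the inductive choice of contractive branches genuinely exhibits a tree of branching $\leq k$, independently of $x$, so that the total count $N k^{n-1}$ is legitimate. This requires being careful that at each induction step, $y_{i+1}$ lies inside $B_r(x_{i+1})$ (ensured by $\lambda^{n-2-i} \vep_0 < \vep_0 < r$), and that the relevant preimages of $y_{i+1}$ enumerate the possibilities independently of the particular $x$ we started from.
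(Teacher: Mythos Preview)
Your proof is correct and follows essentially the same approach as the paper. The only cosmetic difference is that the paper invokes directly the existence of a contractive branch of $f^{-n}$ sending $f^n(x)$ to $x$ (so the spanning set is simply $f^{-n}(F)$ for a finite $\vep_0$-net $F$), whereas you build that branch step by step from branches of $f^{-1}$; your worry in the last paragraph is unfounded, since regardless of which branch is chosen at each step the resulting $y_0$ lands in the fixed set $f^{-(n-1)}(\{y_1,\dots,y_N\})$, which has at most $Nk^{n-1}$ elements.
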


\begin{proof}
Let $\vep_0<min\{\vep/4,c,r\}$. Since $X$ is compact, there is a finite set $F$ for which we can write

\[X=\bigcup_{y\in F}\overline{B}_{\vep_0}(y)\]
Given $x\in X$ and $n\in\NN$, let $y\in F$ be such that $d(f^n(x),y)\leq\vep_0$ and let $g:B_r(f^n(x))\rightarrow X$ be a contractive branch of $f^{-n}$ with $g(f^n(x))=x$. If we take $z=g(y)$, we have

\begin{itemize}

	\item $f^n(z)=f^n(g(y))=y\Longrightarrow z\in f^{-n}(F)$

	\item $d(f^i(x),f^i(z))=d(f^i(g(f^n(x))),f^i(g(y)))\leq\lambda^{n-i}d(f^n(x),y)\leq\lambda^{n-i}\vep_0\leq\vep_0,\forall i\in\{0,1,\ldots,n-1\}\Longrightarrow d_n(x,z)\leq\vep_0$

\end{itemize}
So, $f^{-n}(F)$ is a $(n,\vep_0)$ spanning set for $X$. Therefore, $r_n(\vep_0,X)\leq card(f^{-n}(F))\leq k^n card(F),\forall n\in\NN$ and we get

\[h(f)=r(\vep_0,X)=\limsup(1/n)\log r_n(\vep_0,X)\leq\limsup(1/n)\log(k^n card(F))=\]
\[=\limsup(\log k+(1/n)\log(card(F)))=\log k\]

As a consequence, we have $0\leq L\leq \log k$ and $1/k\leq\rho\leq 1$.\\

Suppose now that there is some $k\in\NN$ such that $card(f^{-1}(\{x\}))=k,\forall x\in X$. Take a point $x\in X$. If we consider $E_n=f^{-n}(\{x\})$, then we have $f^n(u)=f^n(v)=x,\forall u,v\in E_n,u\neq v$. If $f(u)=f(v)$, then $d_n(u,v)\geq d(u,v)>c$, otherwise, we have $f(u)\neq f(v)$. Admitting the last case, if $f^2(u)=f^2(v)$, then $d_n(u,v)\geq d(f(u),f(v))>c$, otherwise, we have $f^2(u)\neq f^2(v)$. Proceeding, and since we have $f^n(u)=f^n(v)$, there must be some $j\in\{1,\ldots,n\}$ for which $f^j(u)=f^j(v)$ and $f^{j-1}(u)\neq f^{j-1}(v)$, so $d_n(u,v)\geq d(f^{j-1}(u),f^{j-1}(v))>c$ and $E_n$ is a $(n,c)$ separated set. Since $card(E_n)=k^n$, we have $k^n\leq s_n(c,X)\leq s_n(\vep_0,X)$ and we get

\[h(f)=s(\vep_0,X)=\limsup(1/n)\log s_n(\vep_0,X)\geq\limsup(1/n)\log(k^n)=\log k\]
which allow us to conclude that, in this particular case, $h(f)=\log k$.

\end{proof}

Now, our goal will be to prove the rationality of the zeta function for Ruelle-expanding maps. Recall that the existence of a Markov partition was an essential ingredient in the proof of the rationality of the zeta function for $\mathcal{C}^1$ diffeomorphisms defined on a hyperbolic set with local product structure. In the case of Ruelle-expanding maps, we will prove the existence of a finite cover with analogous properties, which will play the same role the Markov partition did.

\begin{prop}
Let $f$ be a Ruelle-expanding map defined on a compact set $K$. Let $\vep$ denote an expansive constant for $f$. Then, $K$ has a finite cover $\{R_1,...,R_n\}$ with the following properties:

\begin{itemize}

\item Each $R_i$ has a diameter less than $min\{\vep,c/2\}$ and is proper, that is, equal to the closure of its interior.

\item $\stackrel{\circ}{R_i}\cap\stackrel{\circ}{R_j}=\emptyset,\forall i,j\in[n],i\neq j$

\item $f(\stackrel{\circ}{R_i})\cap\stackrel{\circ}{R_j}\neq\emptyset\Longrightarrow \stackrel{\circ}{R_j}\subseteq f(\stackrel{\circ}{R_i})$

\end{itemize}

\end{prop}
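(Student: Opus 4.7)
The plan is to build the cover in two stages: first a preliminary proper cover by small-diameter atoms with pairwise disjoint interiors, and then an iterative refinement that forces the Markov-type property (iii).

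For the preliminary cover, I would fix $\delta$ with $0<\delta<\min\{\vep,c/2,r\}/4$. Since $K$ is compact, pick a finite $(\delta/4)$-dense set $\{p_1,\ldots,p_m\}$ and form proper closed cells $Q_i$ with $p_i\in Q_i\subseteq\overline{B}_{\delta/2}(p_i)$ having pairwise disjoint interiors --- for example, by taking the closures of lexicographically ordered Voronoi cells around the $p_i$ and discarding any cell with empty interior. This produces a finite cover $\mathcal{Q}=\{Q_1,\ldots,Q_m\}$ with $\mathrm{diam}(Q_i)\le\delta<\min\{\vep,c/2\}$, already satisfying (i) and (ii).

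For the refinement stage, two structural facts about $\mathcal{Q}$ are decisive. First, since $\mathrm{diam}(Q_i)<c$, the map $f|_{Q_i}$ is injective --- otherwise two coincident pre-images in $Q_i$ would be at distance greater than $c$, contradicting the first defining property of a Ruelle-expanding map --- and so it is a homeomorphism onto $f(Q_i)$. Second, since $\mathrm{diam}(Q_j)<r$, for every $x\in f^{-1}(Q_j)$ there is a unique contractive branch $\phi$ of $f^{-1}$ defined on $B_r(f(x))\supseteq Q_j$ with $\phi(f(x))=x$, so $\phi(Q_j)$ is a homeomorphic copy of $Q_j$ of diameter at most $\lambda\delta$. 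For each pair $(i,j)$ with $f(\stackrel{\circ}{Q_i})\cap\stackrel{\circ}{Q_j}\neq\emptyset$ I would then set
\[
R_{ij}\;=\;\overline{\,\stackrel{\circ}{Q_i}\cap f^{-1}(\stackrel{\circ}{Q_j})\,}.
\]
Injectivity of $f|_{Q_i}$ yields $f(\stackrel{\circ}{R_{ij}})=f(\stackrel{\circ}{Q_i})\cap\stackrel{\circ}{Q_j}$; the refined atoms remain proper, of diameter at most $\delta$, with pairwise disjoint interiors. Condition (iii) may still fail at this first level because $f(\stackrel{\circ}{Q_i})$ can cover only part of $\stackrel{\circ}{Q_j}$, so the natural remedy is to iterate the refinement. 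Each iteration shrinks atom diameters by at most $\lambda$ via the contractive branches, so after $N$ steps diameters are bounded by $\lambda^N\delta$; once they fall below the Lebesgue number of the open cover $\{\stackrel{\circ}{Q_j}\}$, each forward $f$-image lies inside a single $\stackrel{\circ}{Q_j}$, and the combinatorics of the refinement should stabilize into a Markov cover.

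The main obstacle is justifying termination of this iteration in finitely many steps and verifying (iii) as an exact equality of saturated unions rather than mere containment. This requires, at each step, tracking how atoms split and ensuring that properness and disjointness of interiors are preserved (they are, since each refined atom is the closure of an intersection of open sets), together with a stability argument showing that once atoms are sufficiently small, one further refinement returns the same cover. The key tools are the uniform contraction rate $\lambda<1$ of the inverse branches, the injectivity of $f$ on each small-diameter atom, and the open local-homeomorphism structure of $f$ on $K$.
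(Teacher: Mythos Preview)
Your approach diverges fundamentally from the paper's. The paper does not iterate a refinement at all; it first proves a shadowing lemma and uses it to build a semiconjugacy $\theta:\Sigma_A^+\to K$ from a one-sided subshift, sets $T_i=\theta(\{\underline{a}:a_0=i\})$, and then performs a \emph{single} combinatorial refinement: for $x$ off the boundaries, $R(x)=\bigcap_i T_i^*(x)$ where $T_i^*(x)$ is either $\stackrel{\circ}{T_i}$ or $K\setminus T_i$ according to whether $x\in T_i$. The Markov property (iii) then follows because contractive branches carry $T_j$ into $T_i$ whenever $A_{ij}=1$, a fact inherited directly from the symbolic coding.

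Your iterative scheme has a genuine gap: it does not terminate, and the stabilization heuristic is incorrect. After $N$ rounds the atoms are (closures of) sets $\bigcap_{m=0}^{N} f^{-m}(\stackrel{\circ}{Q_{i_m}})$, indexed by admissible words of length $N+1$, and their number grows without bound. More importantly, shrinking diameters does nothing to force property~(iii): that condition is about \emph{alignment of the boundary of $f(\stackrel{\circ}{R_i})$ with atom boundaries}, not about size. Even when $\mathrm{diam}(R_i)$ is arbitrarily small, $f(\stackrel{\circ}{R_i})$ can still straddle two adjacent atoms without containing either, so the next refinement again strictly subdivides. Your Lebesgue-number step also fails at the outset: the family $\{\stackrel{\circ}{Q_j}\}$ is \emph{not} an open cover of $K$ (points on $\bigcup_j\partial Q_j$ lie in no interior), so no Lebesgue number exists; and even granting one, ``$f(\text{atom})$ lies in a single $\stackrel{\circ}{Q_j}$'' is a statement about the original cells, not about the refined partition whose Markov property you need. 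The missing idea is precisely what shadowing supplies in the paper: a finite cover $\{T_i\}$ whose boundaries are \emph{a priori} compatible with $f$ via the shift, so that one refinement suffices.
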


\noindent\textbf{Remark}: If $\stackrel{\circ}{R_j}\subseteq f(\stackrel{\circ}{R_i})$, then $R_j=\overline{\stackrel{\circ}{R_j}}\subseteq \overline{f(\stackrel{\circ}{R_i})}\subseteq f\left(\overline{\stackrel{\circ}{R_i}}\right)=f(R_i)$ and the last condition means that $f(\stackrel{\circ}{R_i})\cap\stackrel{\circ}{R_j}\neq\emptyset\Longrightarrow R_j\subseteq f(R_i)$

\paragraph{}

To prove this proposition, we will begin by a shadowing lemma.

\begin{lem}
Let $f:K\rightarrow K$ be Ruelle-expanding. For any $\beta\in]0,r[$ there is some $\alpha>0$ such that, if $(x_n)_{n\in\NN_0}$ is a $\alpha$-pseudo orbit in $K$ \rm(\it that is, if $d(f(x_n),x_{n+1})<\alpha,\forall n\in\NN_0$\rm)\it, then it admits a $\beta$-shadow \rm(\it that is, a point $x\in K$ such that $d(f^n(x),x_n)<\beta,\forall n\in\NN_0$\rm)\it. Besides, the $\beta$-shadow is unique if $\beta<\vep/2$, where $\vep$ is an expansive constant for $f$.
\end{lem}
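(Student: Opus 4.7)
The plan is to use the contractive inverse branches of $f$ to propagate the pseudo-orbit backwards in time and then extract a shadow by compactness. I would first fix $\alpha>0$ small enough that both $\alpha/(1-\lambda)<r$ and $\alpha\lambda/(1-\lambda)<\beta$, so any choice with $\alpha<(1-\lambda)\min\{r,\beta/\lambda\}$ works. Given an $\alpha$-pseudo-orbit $(x_n)_{n\in\NN_0}$ and a horizon $N\in\NN_0$, I would build a finite approximate shadow $y_0^{(N)},\dots,y_N^{(N)}$ by backward induction: set $y_N^{(N)}=x_N$, and, having defined $y_j^{(N)}$, apply the contractive branch $\phi_{j-1}\colon B_r(f(x_{j-1}))\to K$ of $f$ determined by the preimage $x_{j-1}\in f^{-1}(\{f(x_{j-1})\})$ (so $\phi_{j-1}(f(x_{j-1}))=x_{j-1}$ and $\phi_{j-1}$ is $\lambda$-Lipschitz) to set $y_{j-1}^{(N)}=\phi_{j-1}(y_j^{(N)})$. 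By construction $f(y_{j-1}^{(N)})=y_j^{(N)}$, so $f^j(y_0^{(N)})=y_j^{(N)}$ for every $j$.

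Writing $d_j=d(y_j^{(N)},x_j)$, the Lipschitz property of $\phi_{j-1}$ together with the pseudo-orbit inequality $d(f(x_{j-1}),x_j)<\alpha$ yields
\[d_{j-1}\leq \lambda\,d(y_j^{(N)},f(x_{j-1}))<\lambda(d_j+\alpha).\]
Since $d_N=0$, iterating this bound and summing the resulting geometric series gives $d_j\leq\alpha\lambda/(1-\lambda)$ for every $0\leq j\leq N$. By the choice of $\alpha$, this simultaneously guarantees that $y_j^{(N)}\in B_r(f(x_{j-1}))$---so every step of the backward induction is legitimate---and that $d(f^j(y_0^{(N)}),x_j)<\beta$ for every $0\leq j\leq N$.

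To pass from these finite shadows to an infinite one, I would use compactness of $K$ to extract a subsequence of $(y_0^{(N)})_N$ converging to some $x\in K$; continuity of each $f^j$ preserves the estimate in the limit, so $d(f^j(x),x_j)\leq\alpha\lambda/(1-\lambda)<\beta$ for all $j\in\NN_0$, and $x$ is the desired $\beta$-shadow. Uniqueness when $\beta<\vep/2$ is then immediate from expansiveness: two $\beta$-shadows $x,x'$ would satisfy $d(f^n(x),f^n(x'))\leq 2\beta<\vep$ for every $n\in\NN_0$, forcing $x=x'$. The only delicate point is the simultaneous calibration of $\alpha$ so that the backward iterates never leave the domains $B_r(\cdot)$ of the contractive branches and so that the shadowing error stays strictly below $\beta$; the single inequality $\alpha<(1-\lambda)\min\{r,\beta/\lambda\}$ handles both requirements.
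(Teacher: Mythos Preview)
Your proof is correct and follows essentially the same approach as the paper: backward induction along finite segments of the pseudo-orbit using the contractive inverse branches, a compactness argument to pass to the infinite case, and expansiveness for uniqueness. The only minor difference is bookkeeping---you carry the sharper geometric-series bound $d_j<\alpha\lambda/(1-\lambda)$ throughout, which (since this quantity is strictly less than $\beta$) lets you bypass the paper's device of first producing $\beta/2$-shadows so that the limiting inequality remains strictly below $\beta$.
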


\begin{proof}
We will start proving this assertion for finite $\alpha$-pseudo orbits. Let $\beta\in]0,r[$ and $(x_0,x_1,\ldots,x_n)$ be such that $d(f(x_{k-1}),x_k)<\alpha,\forall k\in[n]$ for some $\alpha>0$. If $y_n=x_n$, then $d(y_n,x_n)=0<\beta$. Now, suppose that $d(y_k,x_k)<\beta$ for $k\in[n]$. Since $d(f(x_{k-1}),x_k)<\alpha$, we have $d(y_k,f(x_{k-1}))<\alpha+\beta<r$ if we assume $\alpha<r-\beta$. Then, we can take $y_{k-1}=g(y_k)$, where $g:B_r(f(x_{k-1}))\rightarrow K$ is a contractive branch of $f^{-1}$ with $g(f(x_{k-1}))=x_{k-1}$, and we have $d(y_{k-1},x_{k-1})\leq \lambda d(y_k,f(x_{k-1}))<\lambda (\alpha+\beta)<\beta$ if we assume $\alpha<\frac{1-\lambda}{\lambda}\beta$. Also, notice that $y_k=f(y_{k-1}),\forall k\in[n]$, so that $y_k=f^k(x),\forall k\in[n]$ for $x=y_0$. Hence, it suffices to take $\alpha<\min\{r-\beta,\frac{1-\lambda}{\lambda}\beta\}$.

Now, take $\beta\in]0,r[$ and let $(x_n)_{n\in\NN_0}$ be a $\alpha$-pseudo orbit, with $\alpha<\min\{r-\beta/2,\frac{1-\lambda}{\lambda}\beta/2\}$. Let $z_n$ be a $\beta/2$-shadow of $(x_0,x_1,\ldots,x_n)$; since $K$ is compact, there is some subsequence $(z_{n_k})_k$ converging to some point $z\in K$. We have $d(f^i(z_{n_k}),x_i)<\beta/2,\forall i\in\{0,1,\ldots,n_k\}$, so, for $i\in\NN_0$ fixed we get $d(f^i(z),x_i)=\lim d(f^i(z_{n_k}),x_i)\leq\beta/2<\beta$ and we conclude that $z$ is a $\beta$-shadow of $(x_n)_{n\in\NN_0}$.

For the uniqueness of the $\beta$-shadow when $\beta<\vep/2$, suppose that $z$ and $z'$ are both $\beta$-shadows of $(x_n)_{n\in\NN_0}$. Then, we have $d(f^i(z),f^i(z'))\leq d(f^i(z),x_i)+d(f^i(z'),x_i)<2\beta<\vep,\forall i\in\NN_0$, so $z=z'$.

\end{proof}
\paragraph{}

Let $\vep$ be an expansive constant for $f$ with $\vep<r$ and fix some $\beta<\min\{\vep/2,c/4\}$. Let $\alpha$ be given by the previous lemma and $\gamma\in]0,\alpha/2[$ be such that $d(x,y)<\gamma\Rightarrow d(f(x),f(y))<\alpha/2,\forall x,y\in K$. Since $K$ is compact, we can take $\{p_1,\ldots,p_k\}$ such that $K=\bigcup_{i=1}^k B_{\gamma}(p_i)$. We define a matrix $A\in M_k$ by

\begin{center}
$A_{ij}=1$ if $d(f(p_i),p_j)<\alpha$ and $A_{ij}=0$ otherwise.\\
\end{center}

For every $\underline{a}\in\Sigma^+_A$ the sequence $(p_{a_i})_{i\in\NN_0}$ is a $\alpha$-pseudo orbit, so it admits an unique $\beta$-shadow which we will denote by $\theta(\underline{a})$. Therefore, we have defined a map $\theta:\Sigma_A^+\rightarrow K$.

\begin{lem}
$\theta$ is a semiconjugacy of $\sigma^+_A$ and $f$, that is, $\theta$ is surjective, continuous and verifies $f\circ\theta=\theta\circ\sigma_A^+$.
\end{lem}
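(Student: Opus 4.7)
The plan is to verify the three assertions of the lemma — the intertwining $f\circ\theta=\theta\circ\sigma_A^+$, continuity, and surjectivity — in that order, relying principally on the uniqueness clause of the shadowing lemma and on the contraction rate $\lambda^n$ given by the contractive branches of $f^{-n}$.

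For the intertwining identity, fix $\underline{a}\in\Sigma_A^+$. Since $\theta(\underline{a})$ $\beta$-shadows $(p_{a_i})_{i\in\NN_0}$, for every $i\in\NN_0$ one has $d(f^i(f(\theta(\underline{a}))),p_{a_{i+1}})=d(f^{i+1}(\theta(\underline{a})),p_{a_{i+1}})<\beta$. This says that $f(\theta(\underline{a}))$ is a $\beta$-shadow of the pseudo-orbit associated with $\sigma_A^+(\underline{a})$, so the uniqueness of the $\beta$-shadow (guaranteed by $\beta<\vep/2$) forces $f(\theta(\underline{a}))=\theta(\sigma_A^+(\underline{a}))$.

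For surjectivity, fix $x\in K$. Because $\{B_\gamma(p_j)\}_{j\in[k]}$ covers $K$, I may choose, for each $i\in\NN_0$, an index $a_i\in[k]$ with $d(f^i(x),p_{a_i})<\gamma$. Admissibility then follows from the triangle inequality and the choice of $\gamma$:
\[d(f(p_{a_i}),p_{a_{i+1}})\leq d(f(p_{a_i}),f^{i+1}(x))+d(f^{i+1}(x),p_{a_{i+1}})<\tfrac{\alpha}{2}+\gamma<\alpha,\]
so $A_{a_ia_{i+1}}=1$ and $\underline{a}\in\Sigma_A^+$. The orbit of $x$ itself $\gamma$-shadows $(p_{a_i})_i$, and since $\gamma<\alpha/2<\beta$ it is a $\beta$-shadow; uniqueness yields $\theta(\underline{a})=x$.

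For continuity, I exploit the contractive branches. Given $\eta>0$, I choose $\vep_0<r$ small enough for the branch proposition to apply and also for $2\beta<\vep_0$ (shrinking $\beta$ at the outset if necessary), and then pick $N$ with $\lambda^N\vep_0<\eta$. If $\underline{a},\underline{b}\in\Sigma_A^+$ agree on coordinates $0,\dots,N$, then for each $i\in\{0,\dots,N\}$,
\[d(f^i(\theta(\underline{a})),f^i(\theta(\underline{b})))\leq d(f^i(\theta(\underline{a})),p_{a_i})+d(p_{b_i},f^i(\theta(\underline{b})))<2\beta<\vep_0,\]
so $\theta(\underline{b})\in B(N,\vep_0,\theta(\underline{a}))$. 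Applying the branch $g$ of $f^{-N}$ that sends $f^N(\theta(\underline{a}))$ back to $\theta(\underline{a})$ then gives $d(\theta(\underline{a}),\theta(\underline{b}))\leq\lambda^N\cdot 2\beta<\eta$, and continuity follows because the cylinder of depth $N+1$ around $\underline{a}$ is a neighbourhood in $\Sigma_A^+$.

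The only real obstacle is bookkeeping the constants: the choice of $\beta$ has to satisfy simultaneously $\beta<\vep/2$ (for shadow uniqueness), $\beta<c/4$ (needed downstream for the cover proposition), and $2\beta<\vep_0$ (so that the contractive branch of $f^{-N}$ can be applied to the pair $(f^N(\theta(\underline{a})),f^N(\theta(\underline{b})))$). All these requirements are compatible, and once they are in place the three assertions reduce to the one-line arguments above.
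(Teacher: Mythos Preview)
Your intertwining and surjectivity arguments are essentially the paper's. One quibble on surjectivity: the inequality $\alpha/2<\beta$ that you invoke is not guaranteed by the shadowing lemma (its proof only gives $\alpha<\frac{1-\lambda}{\lambda}\cdot\beta/2$, and $\frac{1-\lambda}{\lambda}$ exceeds $2$ once $\lambda<1/3$). What you actually need is $\gamma<\beta$, and this can simply be imposed as an additional constraint when $\gamma$ is chosen; the paper is silent on this point and arguably has the same lacuna.

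Your continuity argument, however, is genuinely different from the paper's. The paper uses a soft sequential-compactness argument: since $K$ is compact, it suffices to show that whenever two sequences in $\Sigma_A^+$ converge to a common limit and their $\theta$-images converge to $s$ and $t$ respectively, one has $s=t$; this follows immediately from expansivity, because passing to the limit in the shadowing inequalities gives $d(f^i(s),f^i(t))\leq 2\beta<\vep$ for every $i$. You instead exploit the contractive branches to obtain an explicit modulus of continuity, $d(\theta(\underline{a}),\theta(\underline{b}))\leq\lambda^N\cdot 2\beta$ whenever $\underline{a}$ and $\underline{b}$ agree on their first $N+1$ coordinates. The paper's route is tidier in that it needs no further constraint on $\beta$; yours buys more---$\theta$ is in fact H\"older---at the price of the extra bookkeeping $2\beta<\vep_0$ (and note that strictly the branch proposition applies to $B(N,\vep,\cdot)$ for $\vep<\vep_0$, so you should work with some $\vep'$ between $2\beta$ and $\vep_0$ rather than with $\vep_0$ itself).
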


\begin{proof}
Given $x\in K$, we can take $a_i\in [k]$ so that $d(f^i(x),p_{a_i})<\gamma$ for any $i\in\NN_0$; then, $d(f(p_{a_i}),p_{a_{i+1}})\leq d(f(p_{a_i}),f(f^i(x)))+d(f^{i+1}(x),p_{a_{i+1}})<\alpha
/2+\gamma<\alpha$ and $(p_{a_i})_{i\in\NN_0}$ is a $\alpha$-pseudo orbit. So, $x=\theta(\underline{a})$ and $\theta$ is surjective.

For the continuity, since $K$ is compact it suffices to see that, for any two sequences $(\underline{s}^n)_{n\in\NN}$ and $(\underline{t}^n)_{n\in\NN}$ converging to the same limit $l$ in $\Sigma_A^+$ whose images under $\theta$ converge respectively to $s$ and $t$ in $K$, we have $s=t$.
Fix some $i\in\NN_0$; for any $n\in\NN$, we have $d(f^i(\theta(\underline{s}^n)),p_{s^n_i})<\beta$ and $d(f^i(\theta(\underline{t}^n)),p_{t^n_i})<\beta$. So, taking limits we have $d(f^i(s),p_{l_i})\leq\beta$ and $d(f^i(t),p_{l_i})\leq\beta$. Hence, $d(f^i(s),f^i(t))\leq 2\beta<\vep$ and, since $\vep$ is an expansive constant for $f$, we get $s=t$.

Finally, the relation $f\circ\theta=\theta\circ\sigma^+_A$ is a consequence of the unicity of the $\beta$-shadow and the fact that, if $x$ is a $\beta$-shadow for $(p_{a_i})_i$, then $f(x)$ is a $\beta$-shadow for $(p_{a_{i+1}})_i=(p_{\sigma_A^+(a_i)})_i$.

\end{proof}
\paragraph{}

Let $T_i=\{\theta(\underline{a}):a_0=i\}$ for $i\in [k]$. Then, $T_i=\theta(C_i)$ where $C_i=\{\underline{a}\in\Sigma_A^+:a_0=i\}$ and, since $\Sigma_A^+=\bigcup_{i=1}^k C_i$, we have $K=\bigcup_{i=1}^k T_i$ because $\theta$ is surjective. Hence, $\{T_i,i\in[k]\}$ is a finite closed cover of $K$ ($T_i$ is closed since $C_i$ is compact and $\theta$ is continuous).

\begin{lem}
If $A_{ij}=1$, then $T_j\subseteq f(T_i)$ and $\stackrel{\circ}{T_j}\subseteq f(\stackrel{\circ}{T_i})$. Also, given $x\in T_i$ with $f(x)\in T_j$, if $g:B_r(f(x))\rightarrow K$ is a contractive branch of $f^{-1}$ with $g(f(x))=x$, then $g(T_j)\subseteq T_i$ and $g(\stackrel{\circ}{T_j})\subseteq \stackrel{\circ}{T_i}$.
\end{lem}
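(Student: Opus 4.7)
My plan is to lift both assertions to $\Sigma_A^+$ through the semiconjugacy $\theta$, using two features of the construction: the uniqueness of the $\beta$-shadow (which pins down every point of $T_i$ by some sequence in $C_i$) and the first Ruelle-expanding axiom (which forces distinct preimages of a common point to lie at distance $>c$). For the interior statements I would further use that each contractive branch $g$ is a homeomorphism onto its image, together with the fact from the earlier proposition that, for small $\vep$, $g(B_\vep(z))=B(1,\vep,g(z))$ is open in $K$.

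For the first inclusion, given $A_{ij}=1$ and $z\in T_j$, I would write $z=\theta(\underline{b})$ with $b_0=j$ and form $\underline{a}=(i,b_0,b_1,\ldots)$; the new transition is admissible since $A_{ij}=1$ and the remaining ones come from $\underline{b}\in\Sigma_A^+$, so $\underline{a}\in C_i$ and $y:=\theta(\underline{a})\in T_i$ satisfies $f(y)=\theta(\sigma_A^+(\underline{a}))=\theta(\underline{b})=z$. For the contractive-branch inclusion, with $x\in T_i$, $f(x)\in T_j$, and $z\in T_j$ as above, I would keep the same $y=\theta((i,b_0,b_1,\ldots))\in T_i$ and show $g(z)=y$. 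Both $z$ and $f(x)$ sit within $\beta$ of $p_j$, so $d(z,f(x))<2\beta<\vep<r$, which makes $g(z)$ well defined, and contractivity gives $d(g(z),x)\leq\lambda d(z,f(x))<2\lambda\beta$. Since $x$ and $y$ both sit within $\beta$ of $p_i$, one has $d(x,y)<2\beta$, and the triangle inequality gives $d(g(z),y)<2\beta(1+\lambda)<4\beta<c$ using $\beta<c/4$. But $g(z)$ and $y$ are both preimages of $z$ under $f$, and the Ruelle-expanding axiom forces distinct preimages to be $>c$ apart; hence $g(z)=y\in T_i$.

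The interior inclusions would then follow by a topological bootstrap. Given $z\in\stackrel{\circ}{T_j}$, I would choose $\vep<\vep_0$ with $B_\vep(z)\subseteq T_j$; then $g(B_\vep(z))=B(1,\vep,g(z))$ is an open neighborhood of $g(z)$ in $K$, contained in $g(T_j)\subseteq T_i$ by the branch inclusion just established, so $g(z)\in\stackrel{\circ}{T_i}$. For $\stackrel{\circ}{T_j}\subseteq f(\stackrel{\circ}{T_i})$, I would take $y\in T_i$ with $f(y)=z$ produced as in the first inclusion, choose the contractive branch $g$ at $z$ with $g(z)=y$, apply the branch result to get $y\in\stackrel{\circ}{T_i}$, and conclude $z=f(y)\in f(\stackrel{\circ}{T_i})$. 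The main subtlety I expect is justifying that the prepended sequence $(i,b_0,b_1,\ldots)$ belongs to $\Sigma_A^+$ in the branch argument: it hinges on $A_{ij}=1$, a condition not literally assumed there but extracted from the hypothesis $x\in T_i$, $f(x)\in T_j$ together with the continuity modulus packaged into the choice of $\gamma$ and $\alpha$. Carefully tracking these constants is where I expect the delicate part of the proof to lie.
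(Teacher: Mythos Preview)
Your proposal is correct and follows essentially the same route as the paper: lift $T_j\subseteq f(T_i)$ via the prepended sequence $(i,b_0,b_1,\ldots)$, prove $g(T_j)\subseteq T_i$ by producing a preimage in $T_i$ and forcing it to coincide with $g(z)$ through the $c$-separation of preimages (the paper gets $d(g(y),z)<4\beta<c$ via the triangle $g(y)\to x\to p_i\to z$, you get the slightly sharper $2\beta(1+\lambda)<4\beta$ via the $\lambda$-contraction, but the punchline is identical), and then deduce the interior statements from the fact that $g$ is a homeomorphism onto an open set. Your closing worry about extracting $A_{ij}=1$ from the hypotheses $x\in T_i$, $f(x)\in T_j$ is misplaced: the condition $A_{ij}=1$ is a standing hypothesis for the entire lemma (the paper uses it freely in the branch argument to invoke $T_j\subseteq f(T_i)$, and in the only later application of this clause, Lemma~3.5, $A_{ij}=1$ is verified explicitly before invoking it), so no derivation from the modulus-of-continuity constants is needed.
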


\begin{proof}
Given any $y\in T_j$, we have $y=\theta(\underline{b})$ for some $\underline{b}\in\Sigma_A^+$ with $b_0=j$. Since $A_{ij}=1$, we can take $\underline{c}=(i,b_0,b_1,b_2,\ldots)\in \Sigma_A^+$, and so $y=\theta(\underline{b})=\theta(\sigma_A^+(\underline{c}))=f(\theta(\underline{c}))\in f(\theta(C_i))=f(T_i)$. Then, $T_j\subseteq f(T_i)$\\

Notice that $T_j\subseteq B_\beta(p_j)$. Since $d(f(x),p_j)<\beta$, we have $T_j\subseteq B_{2\beta}(f(x))\subseteq B_r(f(x))$. Let $g:B_r(f(x))\rightarrow K$ be a contractive branch of $f^{-1}$ with $g(f(x))=x$. Given $y\in T_j$, we have $y=f(z)$ for some $z\in T_i$. Then,

\[
d(g(y),z)\leq d(g(y),g(f(x)))+d(x,p_i)+d(p_i,z)<d(y,f(x))+2\beta<4\beta<c
\]
and, since $f(g(y))=y=f(z)$, we get $g(y)=z\in T_i$. So, $g(T_j)\subseteq T_i$.\\

It is easy to see that $g:B_r(f(x))\rightarrow g(B_r(f(x)))$ is a homeomorphism, with $g^{-1}=f|_{g(B_r(f(x)))}:g(B_r(f(x)))\rightarrow B_r(f(x))$. Therefore, we conclude that $g(\stackrel{\circ}{T_j})=\overbrace{g(T_j)}^{\circ}\subseteq\stackrel{\circ}{T_i}$ and $\stackrel{\circ}{T_j}=f(g(\stackrel{\circ}{T_j}))\subseteq f(\stackrel{\circ}{T_i})$.

\end{proof}
\paragraph{}

Let $Z=K\backslash\bigcup_{i=1}^k \partial T_i$. Notice that, since $T_i$ is a closed set, $\partial T_i$ has empty interior. So, $Z$ is dense in $K$. Given $x\in Z$ we define

\begin{center}
$T_i^*(x)=\stackrel{\circ}{T_i}$ if $x\in\stackrel{\circ}{T_i}$\\   $T_i^*(x)=K\backslash T_i$ if, otherwise, $x\notin T_i$\\   $R(x)=\bigcap_{i=1}^k T_i^*(x)$
\end{center}

The sets $R(x)$ satisfy the following properties:

\begin{itemize}

\item  $R(x)$ is open (because it is a finite intersection of open sets)

\item  $x\in R(x)$ (because $x\in T_i^*(x),\forall i\in[k]$)

\item  $R(x)\subseteq\stackrel{\circ}{T_i}$ for some $i\in[k]$

(since $\bigcap_{i=1}^k K\backslash T_i=K\backslash\bigcup_{i=1}^k T_i=\emptyset$, we must have $x\in\stackrel{\circ}{T_i}$ for some $i\in[k]$)\\

\item  If $R(x)\cap R(y)\neq\emptyset$, then $R(x)=R(y)$

(in fact, we have $R(x)\cap R(y)\neq\emptyset\Rightarrow \forall i\in[k],T_i^*(x)\cap T_i^*(y)\neq\emptyset\Rightarrow \forall i\in[k],T_i^*(x)=T_i^*(y)\Rightarrow R(x)=R(y)$)\\

\end{itemize}

Moreover,

\begin{lem}
Given $x\in Z\cap f^{-1}(Z)$,  we have $g(R(f(x)))\subseteq R(x)$, where $g:B_r(f(x))\rightarrow K$ is a contractive branch of $f^{-1}$ with $g(f(x))=x$.
\end{lem}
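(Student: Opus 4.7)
The plan is to introduce the sets $I=\{i : x\in\stackrel{\circ}{T_i}\}$ and $J=\{j : f(x)\in\stackrel{\circ}{T_j}\}$; both are nonempty because the $T_i$'s cover $K$ and $x,f(x)\in Z$. Fix $y\in R(f(x))$ and set $z=g(y)$. A quick observation is that $R(f(x))\subseteq Z$, because any point in $R(f(x))$ avoids every $\partial T_j$; it follows that $y\in\stackrel{\circ}{T_j}$ for $j\in J$ and $y\notin T_j$ for $j\notin J$. The task is therefore to show that $z\in\stackrel{\circ}{T_i}$ for each $i\in I$ and $z\notin T_\ell$ for each $\ell\notin I$.

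For the first half, fix any $i\in I$ and any $j\in J$. Since $x\in T_i$ and $f(x)\in T_j$, the previous lemma applied to $g$ gives $g(\stackrel{\circ}{T_j})\subseteq\stackrel{\circ}{T_i}$. Because $R(f(x))\subseteq\stackrel{\circ}{T_j}$, this yields $z\in\stackrel{\circ}{T_i}$ at once.

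The second half is more delicate and I would argue by contradiction. Suppose $z\in T_\ell$ for some $\ell\notin I$. Picking any $j\in J$, we have $y=f(z)\in\stackrel{\circ}{T_j}\subseteq T_j$, so applying the previous lemma with $(z,y)$ in the roles of $(x,f(x))$ produces a contractive branch $\tilde g:B_r(y)\to K$ with $\tilde g(y)=z$ and $\tilde g(\stackrel{\circ}{T_j})\subseteq\stackrel{\circ}{T_\ell}$. Both $f(x)$ and $y$ lie in $\stackrel{\circ}{T_j}\subseteq B_\beta(p_j)$, so $d(f(x),y)<2\beta<r$; hence $f(x)\in B_r(y)$ and $\tilde g(f(x))\in\stackrel{\circ}{T_\ell}$.

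The crux is then to identify $\tilde g(f(x))$ with $x$. Both are preimages of $f(x)$ under $f$, and the contraction estimates give
\[
d(\tilde g(f(x)),z)\leq\lambda\,d(f(x),y)<2\lambda\beta,\qquad d(x,z)=d(g(f(x)),g(y))\leq\lambda\,d(f(x),y)<2\lambda\beta,
\]
so $d(\tilde g(f(x)),x)<4\lambda\beta<4\beta<c$, using the choice $\beta<c/4$ made earlier. The first Ruelle-expanding axiom separates distinct preimages of a common point by more than $c$, so $\tilde g(f(x))=x$. Consequently $x\in\stackrel{\circ}{T_\ell}$, contradicting $\ell\notin I$, and the proof is complete. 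The main obstacle is precisely this forced identification between the two a priori different local branches $g$ and $\tilde g$; it is exactly the smallness condition $\beta<c/4$ baked into the construction of the $T_i$'s that makes the identification possible.
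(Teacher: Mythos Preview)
Your argument has a real gap in both halves: you invoke the previous lemma (the one concluding $g(\stackrel{\circ}{T_j})\subseteq\stackrel{\circ}{T_i}$) for an \emph{arbitrary} $j\in J$, but that lemma is stated and proved under the standing hypothesis $A_{ij}=1$; its proof uses $T_j\subseteq f(T_i)$, which is where $A_{ij}=1$ enters. The bare conditions $x\in T_i$ and $f(x)\in T_j$ do \emph{not} force $A_{ij}=1$: the matrix entry records $d(f(p_i),p_j)<\alpha$, and since $\beta$ was fixed before $\alpha$ and $\gamma$, the inclusions $T_i\subseteq B_\beta(p_i)$, $T_j\subseteq B_\beta(p_j)$ give no control of $d(f(p_i),p_j)$ in terms of $\alpha$. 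The same issue recurs in your contradiction step, where you need $A_{\ell j}=1$ to get $\tilde g(\stackrel{\circ}{T_j})\subseteq\stackrel{\circ}{T_\ell}$.

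The missing idea is exactly what the paper supplies: one must \emph{choose} $j$ via the symbolic coding $\theta$. If $x=\theta(\underline a)$ with $a_0=i$, take $j=a_1$; then $A_{ij}=1$ automatically and $f(x)\in T_j$, hence (since $f(x)\in Z$) $f(x)\in\stackrel{\circ}{T_j}$ and $j\in J$. With this choice your first half goes through. For the second half the paper argues the contrapositive $g(y)\in T_\ell\Rightarrow x\in T_\ell$ in the same way: write $g(y)=\theta(\underline b)$ with $b_0=\ell$, set $j=b_1$, note $y\in T_j$ and (using $R(f(x))=R(y)$) $f(x)\in T_j$, and then apply the previous lemma to the \emph{same} branch $g$ to get $x=g(f(x))\in g(T_j)\subseteq T_\ell$. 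This avoids introducing a second branch $\tilde g$ and the separate identification $\tilde g(f(x))=x$; your distance argument for that identification is correct, but it is doing extra work that the coding already handles.
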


\begin{proof}
Let $y\in R(f(x))$. Notice that $y\in Z$ and $f(x)\in R(y)$.\\

For $i\in[k]$, if $x\in T_i$ then $x=\theta(\underline{a})$ for some $\underline{a}\in\Sigma_A^+$ with $a_0=i$. Let $j=a_1$. Then, $f(x)=\theta(\sigma(\underline{a}))$ and $f(x)\in T_j$, so that $y\in R(f(x))\subseteq T_j\Rightarrow g(y)\in g(T_j)$. Since $A_{ij}=1$, by the previous lemma we get $g(T_j)\subseteq T_i$ and, hence, $g(y)\in T_i$.\\

On the other hand, if $g(y)\in T_i$ then $g(y)=\theta(\underline{b})$ for some $\underline{b}\in\Sigma_A^+$ with $b_0=i$. Let $j=b_1$. Then, $y=f(g(y))=\theta(\sigma(\underline{b}))$ and $y\in T_j$, so that $f(x)\in R(y)\subseteq T_j\Rightarrow x=g(f(x))\in g(T_j)$. Since $A_{ij}=1$, by the previous lemma we get $g(T_j)\subseteq T_i$ and, hence, $x\in T_i$. So, $x\in T_i\Leftrightarrow g(y)\in T_i,\forall i\in[k]$.\\

Similarly, using the previous lemma we get $x\in \stackrel{\circ}{T_i}\Leftrightarrow g(y)\in \stackrel{\circ}{T_i},\forall i\in[k]$. This way, we conclude that $g(y)\in R(x)$.

\end{proof}
\paragraph{}

Let $R=\{\overline{R(x)},x\in Z\}$. Since $R$ is obviously a finite set, we can write $R=\{R_1,\ldots,R_s\}$ with $R_i\neq R_j$ if $i\neq j$. Also, since $Z$ is dense in $K$, we have $K=\overline{\bigcup_{x\in Z}\{x\}}=\overline{\bigcup_{x\in Z}R(x)}=\bigcup_{x\in Z}\overline{R(x)}=\bigcup_{i=1}^s R_i$, that is, $R$ is a finite closed cover of $K$. Let us see that $R$ satisfies the required properties.

\begin{itemize}

\item[1.]  $R_i$ has a diameter less than $min\{\vep,c/2\}$ and is proper.

Take $x\in Z$ such that $R_i=\overline{R(x)}$ and $j\in[k]$ such that $R(x)\subseteq\stackrel{\circ}{T_j}$. Then, $R_i=\overline{R(x)}\subseteq\overline{\stackrel{\circ}{T_j}}\subseteq \overline{T_j}=T_j$ and $diam(R_i)\leq diam(T_j)\leq 2\beta<min\{\vep,c/2\}$. Also, using the fact that the closure of the interior of the closure of the interior of a set is just the closure of the interior of that set, we have

$\overline{\stackrel{\circ}{R_i}}=\overline{\stackrel{\circ}{\overline{R(x)}}}=\overline{\stackrel{\circ}{\overline{\stackrel{\circ}{R(x)}}}}=\overline{\stackrel{\circ}{R(x)}}=\overline{R(x)}=R_i$ because $R(x)$ is open.\\

\item[2.]  $\stackrel{\circ}{R_i}\cap\stackrel{\circ}{R_j}=\emptyset,\forall i,j\in[n],i\neq j$

Take $x,y\in Z$ such that $R_i=\overline{R(x)}$ and $R_j=\overline{R(y)}$. Suppose that $\stackrel{\circ}{R_i}\cap\stackrel{\circ}{R_j}\neq\emptyset$; using the fact that any open set that intersects the closure of a set also intersects the set itself, we get\\

$\stackrel{\circ}{\overline{R(x)}}\cap\stackrel{\circ}{\overline{R(y)}}\neq\emptyset\Rightarrow \stackrel{\circ}{\overline{R(x)}}\cap\overline{R(y)}\neq\emptyset\Rightarrow \stackrel{\circ}{\overline{R(x)}}\cap R(y)\neq\emptyset\Rightarrow \overline{R(x)}\cap R(y)\neq\emptyset\Rightarrow$

$\Rightarrow R(x)\cap R(y)\neq\emptyset\Rightarrow R(x)=R(y)\Rightarrow R_i=R_j\Rightarrow i=j$\\

\item[3.]  $f(\stackrel{\circ}{R_i})\cap\stackrel{\circ}{R_j}\neq\emptyset \Rightarrow \stackrel{\circ}{R_j}\subseteq f(\stackrel{\circ}{R_i})$

Since $f$ takes open sets into open sets and $Z$ is dense in $K$, $f^{-1}(Z)$ is also dense in $K$. Also, $Z$ is a nonempty open set, so $Z\cap f^{-1}(Z)$ is dense in $Z$, and, hence, $Z\cap f^{-1}(Z)$ is dense in $K$. Since $\stackrel{\circ}{R_i}\cap f^{-1}(\stackrel{\circ}{R_j})$ is a nonempty open set, we have $Z\cap f^{-1}(Z)\cap\stackrel{\circ}{R_i}\cap f^{-1}(\stackrel{\circ}{R_j})\neq\emptyset$, so we can take $x\in Z\cap\stackrel{\circ}{R_i}$ with $f(x)\in Z\cap\stackrel{\circ}{R_j}$.
Notice that $x\in R(x)\subseteq\stackrel{\circ}{\overline{R(x)}}\Longrightarrow \stackrel{\circ}{R_i}\cap\stackrel{\circ}{\overline{R(x)}}\neq\emptyset\Longrightarrow R_i =\overline{R(x)}$ and, similarly, $R_j=\overline{R(f(x))}$. Using the previous lemma and the fact that $g$ is continuous, we get $g(R_j)=g(\overline{R(f(x))})\subseteq\overline{g(R(f(x)))}\subseteq\overline{R(x)}=R_i\Longrightarrow R_j=f(g(R_j))\subseteq f(R_i)$.

\end{itemize}

\paragraph{}

Now we will see that there is a semiconjugacy between $f$ and a unilateral subshift of finite type. Let $\left\{R_1,...,R_k\right\}$ be a partition of $K$ as above. We can define a matrix $A\in M_k$ by

\begin{center}
\it $A_{i j}=1$ if $f(\stackrel{\circ}{R_i})\cap\stackrel{\circ}{R_j}\neq\emptyset$\\  $A_{i j}=0$ otherwise.
\end{center}

\begin{lem}
Let $(a_0,...,a_n)$ be an admissible sequence for $A$. Then, $\bigcap_{i=0}^n f^{-i}(\stackrel{\circ}{R_{a_i}})\neq\emptyset$.
\end{lem}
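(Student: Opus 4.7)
The plan is to proceed by induction on $n$, using the crucial property established earlier that $A_{ij}=1$ forces $\stackrel{\circ}{R_j}\subseteq f(\stackrel{\circ}{R_i})$ (this is the third property of the cover together with the fact that $A_{ij}=1$ was defined to mean $f(\stackrel{\circ}{R_i})\cap\stackrel{\circ}{R_j}\neq\emptyset$).

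For the base case $n=0$, we just need $\stackrel{\circ}{R_{a_0}}\neq\emptyset$, which holds because each $R_i$ is proper (equal to the closure of its interior) and nonempty, so its interior must be nonempty as well.

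For the inductive step, suppose the claim holds for admissible sequences of length $n+1$, and let $(a_0,a_1,\ldots,a_{n+1})$ be admissible of length $n+2$. The shifted sequence $(a_1,\ldots,a_{n+1})$ is admissible, so by the induction hypothesis there exists
\[y\in\bigcap_{i=0}^{n} f^{-i}(\stackrel{\circ}{R_{a_{i+1}}}),\]
meaning $f^{i}(y)\in\stackrel{\circ}{R_{a_{i+1}}}$ for every $i\in\{0,\ldots,n\}$. In particular $y\in\stackrel{\circ}{R_{a_1}}$. Since $A_{a_0 a_1}=1$, the property $\stackrel{\circ}{R_{a_1}}\subseteq f(\stackrel{\circ}{R_{a_0}})$ yields some $x\in\stackrel{\circ}{R_{a_0}}$ with $f(x)=y$. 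Then $f^{0}(x)=x\in\stackrel{\circ}{R_{a_0}}$ and, for $i\in\{1,\ldots,n+1\}$, $f^{i}(x)=f^{i-1}(y)\in\stackrel{\circ}{R_{a_i}}$. Hence $x\in\bigcap_{i=0}^{n+1}f^{-i}(\stackrel{\circ}{R_{a_i}})$, closing the induction.

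I do not expect a serious obstacle here; the argument reduces entirely to repeated application of the implication $A_{ij}=1\Rightarrow\stackrel{\circ}{R_j}\subseteq f(\stackrel{\circs}{R_i})$. The only point worth double-checking is the base case, where one must invoke properness of the $R_i$ to guarantee that the interiors are nonempty — otherwise the statement would collapse trivially. No appeal to the contractive branches of $f^{-1}$ or to the shadowing lemma is needed at this stage; those ingredients were already spent in constructing the cover with the forward-invariance property $A_{ij}=1\Rightarrow\stackrel{\circ}{R_j}\subseteq f(\stackrel{\circ}{R_i})$, which is precisely what makes the induction go through.
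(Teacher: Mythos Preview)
Your proof is correct and follows essentially the same approach as the paper: induction on the length of the admissible sequence, with the inductive step applying the implication $A_{a_0 a_1}=1\Rightarrow\stackrel{\circ}{R_{a_1}}\subseteq f(\stackrel{\circ}{R_{a_0}})$ to pull back a point $y$ in the shorter intersection to a point $x\in\stackrel{\circ}{R_{a_0}}$ with $f(x)=y$. Your treatment is in fact slightly more careful than the paper's, since you explicitly justify the base case via properness of the $R_i$.
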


\begin{proof}
The lemma is trivial for sequences with just one element. Suppose now that the lemma is valid for the admissible sequence $(a_1,...,a_n)$, so that $\bigcap_{i=0}^{n-1} f^{-i}(\stackrel{\circ}{R_{a_{i+1}}})\neq\emptyset$. Let $y\in\bigcap_{i=0}^{n-1}f^{-i}(\stackrel{\circ}{R_{a_{i+1}}})$. Since $A_{a_0 a_1}=1$, we have $\stackrel{\circ}{R_1}\subseteq f(\stackrel{\circ}{R_0})$. So, $y=f(x)$ for some $x\in\stackrel{\circ}{R_0}$ and it is easy to see that $x\in\bigcap_{i=0}^n f^{-i}(\stackrel{\circ}{R_{a_i}})$.

\end{proof}
\paragraph{}

As a consequence of this lemma, we can see that, for each sequence $\underline{a}=(a_n)_{n\in\NN_0}\in\Sigma_A^+$, if $F_n=\bigcap_{i=0}^n f^{-i}(R_{a_i})$ then $(F_n)_n$ is a decreasing sequence of nonempty compact sets, so its limit is nonempty. Besides, if $x$ and $y$ are two points in this intersection, then $\forall i\in\NN_0,d(f^i(x),f^i(y))\leq diam(R_{a_i})<\vep$, so $x=y$. Therefore, we can define a map $\Pi:\Sigma_A^+\rightarrow K$ by

\[
\{\Pi(\underline{a})\}=\lim F_n=\bigcap_{n=0}^\infty f^{-n}(R_{a_n})
\]

Let $\underline{a}\in\Sigma_A^+$. Notice that, since $f$ is surjective, $f(f^{-1}(L))=L$ for any $L\subseteq K$. Then, we have

\[
\{f(\Pi(\underline{a}))\}=f \left(\bigcap_{n=0}^\infty f^{-n}(R_{a_n})\right)\subseteq f(R_{a_0})\cap\bigcap_{n=1}^\infty f^{-(n-1)}(R_{a_n})=
\]

\[
=f(R_{a_0})\cap\bigcap_{n=0}^\infty f^{-n}(R_{a_{n+1}})=\bigcap_{n=0}^\infty f^{-n}(R_{a_{n+1}})=\{\Pi(\sigma_A^+(\underline{a}))\}
\]
(recall that $A_{a_0 a_1}=1$ implies $f(R_{a_0})\supseteq R_{a_1}$). So, $f(\Pi(\underline{a}))=\Pi(\sigma_A^+(\underline{a}))$ and, since $\Pi$ is surjective and continuous, it is a semiconjugacy of $\sigma_A^+$ and $f$. A point in $K$ can have more than one preimage under $\Pi$, but we will show that it can not have more than $k$ preimages.

\begin{lem}
Let $(a_0,...,a_n)$ and $(b_0,...,b_n)$ be two admissible sequences for $A$ with $a_n=b_n$. If $\forall i\in\{0,\ldots,n\},R_{a_i}\cap R_{b_i}\neq\emptyset$, then the sequences are equal.
\end{lem}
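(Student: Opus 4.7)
The plan is to prove the statement by downward induction on $i$, starting from $a_n=b_n$ and pushing the equality back one step at a time. The key observation is that the hypothesis $R_{a_i}\cap R_{b_i}\neq\emptyset$ combined with the diameter bound $\mathrm{diam}(R_{a_i})<c/2$ forces any point of $R_{a_{i}}$ and any point of $R_{b_i}$ to lie within distance $c$: picking $z\in R_{a_i}\cap R_{b_i}$ and applying the triangle inequality gives $d(x,y)\leq c/2+c/2=c$ for all $x\in R_{a_i}$, $y\in R_{b_i}$. Thus the restriction of $f$ to $R_{a_i}\cup R_{b_i}$ is injective, because the Ruelle-expanding hypothesis says that any two distinct points with the same image must lie strictly more than $c$ apart.

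The inductive step goes as follows. Assume $a_j=b_j$ for some $j\in\{1,\ldots,n\}$; I want to deduce $a_{j-1}=b_{j-1}$. Admissibility of the two sequences gives $A_{a_{j-1}a_j}=A_{b_{j-1}b_j}=1$, so by the Markov-type property of the cover proved just before, $\stackrel{\circ}{R_{a_j}}\subseteq f(\stackrel{\circ}{R_{a_{j-1}}})$ and $\stackrel{\circ}{R_{b_j}}\subseteq f(\stackrel{\circ}{R_{b_{j-1}}})$. Since $a_j=b_j$, the interior $\stackrel{\circ}{R_{a_j}}$ is nonempty (it is proper) and is contained in both images. Pick any $y\in\stackrel{\circ}{R_{a_j}}$ and choose preimages $x_1\in\stackrel{\circ}{R_{a_{j-1}}}$ and $x_2\in\stackrel{\circ}{R_{b_{j-1}}}$ with $f(x_1)=f(x_2)=y$.

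Now apply the injectivity observation from the previous paragraph: since $R_{a_{j-1}}\cap R_{b_{j-1}}\neq\emptyset$, we have $d(x_1,x_2)<c$, while $f(x_1)=f(x_2)$; the Ruelle-expanding property then forces $x_1=x_2$. This common point therefore lies in $\stackrel{\circ}{R_{a_{j-1}}}\cap\stackrel{\circ}{R_{b_{j-1}}}$, which is nonempty. But property 2 of the cover says distinct members have disjoint interiors, hence $a_{j-1}=b_{j-1}$, completing the induction.

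There is no serious obstacle here; the only thing to be careful about is matching the two sources of information about the index $j$: the combinatorial fact $a_j=b_j$ (to identify $\stackrel{\circ}{R_{a_j}}$ with $\stackrel{\circ}{R_{b_j}}$ as the common target of the two pre-image arguments) and the metric fact that $R_{a_{j-1}}\cap R_{b_{j-1}}\neq\emptyset$ (to apply Ruelle-expansion to the two pre-images). Once these are separated, the argument is a short chain combining diameter, expansion, and the Markov property of the cover.
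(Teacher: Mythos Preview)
Your proof is correct and follows essentially the same downward-induction argument as the paper: assume $a_j=b_j$, use the Markov property to lift a point of $\stackrel{\circ}{R_{a_j}}$ to preimages in $\stackrel{\circ}{R_{a_{j-1}}}$ and $\stackrel{\circ}{R_{b_{j-1}}}$, bound their distance by $c$ via the intersecting-rectangles hypothesis, and conclude they coincide. The only cosmetic difference is that the paper first fixes a single point $x$ with $f^i(x)\in\stackrel{\circ}{R_{a_i}}$ for all $i$ (invoking the preceding lemma) and uses $f^j(x)$ as the target at each step, whereas you pick an arbitrary point of $\stackrel{\circ}{R_{a_j}}$ each time, relying on properness for nonemptiness; both work equally well.
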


\begin{proof}
We have seen in the previous lemma that $\bigcap_{i=0}^n f^{-i}(\stackrel{\circ}{R_{a_i}})\neq\emptyset$, so there is some $x\in K$ with $f^i(x)\in\stackrel{\circ}{R_{a_i}}$. By hypothesis, $R_{a_n}=R_{b_n}$. Suppose now that, for $i\in[n]$, we have $R_{a_i}=R_{b_i}$. Since $A_{a_{i-1} a_i}=A_{b_{i-1} b_i}=1$, we get $\stackrel{\circ}{R_{a_i}}\subseteq f(\stackrel{\circ}{R_{a_{i-1}}})$ and $\stackrel{\circ}{R_{b_i}}\subseteq f(\stackrel{\circ}{R_{b_{i-1}}})$. Then, since $f^i(x)\in\stackrel{\circ}{R_{a_i}}=\stackrel{\circ}{R_{b_i}}$ there are $y\in\stackrel{\circ}{R_{a_{i-1}}}$ and $z\in\stackrel{\circ}{R_{b_{i-1}}}$ such that $f^i(x)=f(y)=f(z)$. Also, $d(y,z)\leq diam(R_{a_{i-1}})+diam(R_{b_{i-1}})\leq c$ because $R_{a_{i-1}}\cap R_{b_{i-1}}\neq\emptyset$. So, $y=z$ and $\stackrel{\circ}{R_{a_{i-1}}}\cap\stackrel{\circ}{R_{b_{i-1}}}\neq\emptyset$. Since different elements of the partition must have disjoint interior, we conclude that $R_{a_{i-1}}=R_{b_{i-1}}$.
\end{proof}

Therefore,

\begin{prop}
Any point of $K$ has no more than $k$ preimages under $\Pi$, where $k$ is the number of rectangles of the partition.
\end{prop}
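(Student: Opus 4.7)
The plan is to leverage the preceding lemma (agreement at a single coordinate forces agreement on the whole prefix) to show that any two distinct elements of $\Pi^{-1}(x)$ must actually disagree on every sufficiently large coordinate. Once this is established, a simple pigeonhole argument on a single coordinate gives the bound $|\Pi^{-1}(x)|\leq k$.

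First I would observe that if $\underline{a},\underline{b}\in\Pi^{-1}(x)$, then $f^i(x)\in R_{a_i}\cap R_{b_i}$ for every $i\in\NN_0$ by the very definition of $\Pi(\underline{a})=\bigcap_n f^{-n}(R_{a_n})$, so in particular $R_{a_i}\cap R_{b_i}\neq\emptyset$ for all $i$. This places every pair of preimages in the setting of the previous lemma, applied to arbitrary truncations.

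Next, assume $\underline{a}\neq\underline{b}$ are both in $\Pi^{-1}(x)$ and let $N=\min\{n\in\NN_0:a_n\neq b_n\}$. I claim $a_n\neq b_n$ for all $n\geq N$. Indeed, if $a_n=b_n$ for some $n>N$, then applying the previous lemma to the admissible truncations $(a_0,\ldots,a_n)$ and $(b_0,\ldots,b_n)$ (whose rectangles pairwise meet, by the first step) would yield $a_i=b_i$ for every $i\leq n$, contradicting $a_N\neq b_N$.

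Finally, let $\underline{a}^{(1)},\ldots,\underline{a}^{(m)}$ be any pairwise distinct elements of $\Pi^{-1}(x)$. For each pair $i\neq j$ let $N_{ij}$ be the first disagreement index, and set $N=\max_{i\neq j}N_{ij}$. By the previous step, $a^{(i)}_n\neq a^{(j)}_n$ for every $n\geq N$ and every $i\neq j$; in particular $a^{(1)}_N,\ldots,a^{(m)}_N$ are $m$ pairwise distinct elements of $[k]$, forcing $m\leq k$. Since $m$ was the size of an arbitrary finite subfamily of $\Pi^{-1}(x)$, we conclude $|\Pi^{-1}(x)|\leq k$. No genuine obstacle arises beyond the preceding lemma; the only delicate point is the bookkeeping of first-disagreement indices, which rules out the possibility that two preimages could disagree at $N$ but re-synchronize later.
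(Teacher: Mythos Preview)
Your proof is correct and follows essentially the same approach as the paper: both rely on the preceding lemma (two admissible sequences whose rectangles pairwise meet and which share a common last symbol must coincide) combined with a pigeonhole argument on a single coordinate. The paper phrases it as a proof by contradiction---assuming $k+1$ preimages, taking $n$ large enough that the truncations are distinct, and noting two must share the same $n$-th symbol---while you argue directly that distinct preimages eventually disagree at every coordinate; but the underlying mechanism is identical.
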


\begin{proof}
Suppose, by contradiction, that there is a point in $x\in K$ with $k+1$ distinct preimages. Call these preimages $\underline{x}^1,\underline{x}^2,\ldots,\underline{x}^{k+1}$. Then, for $n$ big enough, the admissible sequences $(x_0^i,\ldots,x_n^i)$ must be different from each other. But, since we have $k+1$ sequences, at least two of them must have the same last element of the sequence, so they should be equal by the previous lemma (recall that, by definition of $\Pi$, $f^m(x)\in R_{x_m^i}$ for every $m\in\{0,\ldots,n\}$ and $i\in\left[k+1\right]$).
\end{proof}

\begin{prop}
The preimages of periodic points of $f$ are periodic points of $\sigma=\sigma_A^+$.
\end{prop}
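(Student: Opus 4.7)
The plan is to exploit the semiconjugacy relation $f\circ\Pi=\Pi\circ\sigma$ together with the finiteness of $\Pi^{-1}(x)$ established in the preceding proposition, and then to promote the eventual periodicity so obtained to genuine periodicity by means of the uniqueness lemma for admissible sequences.

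First I would fix $x\in K$ with $f^n(x)=x$ and $\underline{a}\in\Sigma_A^+$ with $\Pi(\underline{a})=x$. Because $\Pi$ is a semiconjugacy, $\Pi(\sigma^n(\underline{a}))=f^n(\Pi(\underline{a}))=x$, so the map $\sigma^n$ sends $\Pi^{-1}(x)$ into itself. By the previous proposition the set $\Pi^{-1}(x)$ has cardinality at most $k$, so the forward orbit $\underline{a},\sigma^n(\underline{a}),\sigma^{2n}(\underline{a}),\ldots$ cannot consist of distinct points; pigeonhole yields integers $0\le q<p$ with $\sigma^{np}(\underline{a})=\sigma^{nq}(\underline{a})$. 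Setting $N=n(p-q)$, this means $a_{j+N}=a_j$ for every $j\ge nq$, i.e.\ $\underline{a}$ is eventually periodic with period $N$.

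To upgrade this to full periodicity I would introduce $\underline{a}'=\sigma^{N}(\underline{a})$, so that $a'_i=a_{N+i}$. Both $\underline{a}$ and $\underline{a}'$ lie in $\Pi^{-1}(x)$, hence $f^i(x)\in R_{a_i}$ for every $i\in\NN_0$, and using $f^N(x)=x$ also $f^i(x)=f^{N+i}(x)\in R_{a'_i}$; in particular $R_{a_i}\cap R_{a'_i}\neq\emptyset$. Now for any $m\ge nq$ the last coordinates of the admissible prefixes $(a_0,\ldots,a_m)$ and $(a'_0,\ldots,a'_m)$ coincide by the eventual periodicity, so the previous lemma forces $a_i=a'_i$ for all $i\in\{0,\ldots,m\}$. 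Letting $m\to\infty$ gives $\underline{a}=\underline{a}'$, that is, $\sigma^N(\underline{a})=\underline{a}$, which is the required periodicity.

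The main obstacle is precisely the gap between \emph{eventually periodic} and \emph{periodic}: pigeonhole on the finite set $\Pi^{-1}(x)$ only guarantees that two forward iterates of $\underline{a}$ under $\sigma^n$ coincide, not that $\underline{a}$ itself returns. Closing this gap requires that an admissible sequence be reconstructible from its tail together with the compatibility conditions $R_{a_i}\cap R_{b_i}\neq\emptyset$, and this is exactly what the uniqueness lemma provides; without it, one could imagine preperiodic preimages of a periodic point, so this is where the combinatorial structure of the cover $\{R_1,\ldots,R_k\}$ is doing real work.
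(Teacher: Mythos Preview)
Your proof is correct. The key ingredients are the same as the paper's---finiteness of $\Pi^{-1}(x)$ and the uniqueness lemma for admissible sequences with a common last symbol---but the logical structure differs. The paper argues that $\sigma^p$ is \emph{injective} on $\Pi^{-1}(x)$: if $\sigma^p(\underline{x}^i)=\sigma^p(\underline{x}^j)$ then in particular $x_p^i=x_p^j$, and the uniqueness lemma forces $\underline{x}^i=\underline{x}^j$. Injectivity on a finite set gives a bijection, so $\sigma^p$ acts as a permutation $\mu$ on $\Pi^{-1}(x)$ and every preimage is periodic with period $\mathrm{ord}(\mu)\cdot p$. Your route instead uses pigeonhole to obtain eventual periodicity of a single preimage and then invokes the uniqueness lemma to erase the preperiod. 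Both are valid; the paper's argument is marginally shorter and, more importantly, explicitly produces the permutation $\mu$ of $\Pi^{-1}(x)$ induced by $\sigma^p$, a piece of structure that is reused verbatim in the subsequent counting formula for $N_p(f)$.
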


\begin{proof}
Suppose that $x\in K$ is such that $f^p(x)=x$ for some $p\in\NN$. Let $\underline{x}^1,\underline{x}^2,\ldots,\underline{x}^r$ be the preimages of $x$, distinct from each other by hypothesis. Then, for every $i\in[r]$, we have $\Pi(\sigma^p(\underline{x}^i))=f^p(\Pi(\underline{x}^i))=f^p(x)=x$, so that $\sigma^p(\underline{x}^1),\sigma^p(\underline{x}^2),\ldots,\sigma^p(\underline{x}^r)$ are also preimages of $x$.\\

Assume that there are $i,j\in[r]$, $i\neq j$, with $\sigma^p(\underline{x}^i)=\sigma^p(\underline{x}^j)$; in particular, we have $x_p^i=x_p^j$. Then, the admissible sequences $(x_0^i,\ldots,x_p^i)$ and $(x_0^j,\ldots,x_p^j)$ verify the hypothesis of the previous lemma, therefore they must be equal. So, $\underline{x}^i=(x_0^i,x_1^i\ldots,x_p^i,x_{p+1}^i,\ldots)=(x_0^j,x_1^j\ldots,x_p^j,x_{p+1}^j,\ldots)=\underline{x}^j$, which contradicts the assumption that $\underline{x}^1,\underline{x}^2,\ldots,\underline{x}^r$ are distinct from each other.\\

So, $\sigma^p(\underline{x}^1),\sigma^p(\underline{x}^2),\ldots,\sigma^p(\underline{x}^r)$ are also distinct from each other and, therefore, they are precisely the preimages of $x$, so that there is some permutation $\mu\in S_r$ such that $\sigma^p(\underline{x}^i)=\underline{x}^{\mu(i)}$ for every $i\in[r]$. So, $\sigma^{ord(\mu)p}(\underline{x}^i)=\underline{x}^{\mu^{ord(\mu)}(i)}=\underline{x}^i$ for every $i\in[r]$.
\end{proof}

\begin{prop}
If $\underline{s}$ and $\underline{t}$ are two preimages of a periodic point $x\in K$ with $s_i=t_i$ for some $i\in\NN_0$, then $\underline{s}=\underline{t}$.
\end{prop}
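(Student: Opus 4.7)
The plan is to exploit the previous lemma (that two admissible blocks of the same length ending at the same symbol, with pairwise intersecting rectangles, must coincide) twice: once to push the agreement from coordinate $i$ backward to coordinate $0$, and once, using periodicity, to push it forward past coordinate $i$.

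First I would record the basic observation that since $\Pi(\underline{s})=\Pi(\underline{t})=x$, the point $f^j(x)$ belongs to $R_{s_j}\cap R_{t_j}$ for every $j\in\NN_0$, so in particular $R_{s_j}\cap R_{t_j}\neq\emptyset$ for all $j$. Next I would apply the previous lemma to the pair of admissible blocks $(s_0,s_1,\ldots,s_i)$ and $(t_0,t_1,\ldots,t_i)$: they have the same last symbol by hypothesis and the pairwise intersection condition is exactly what was just checked, so the lemma gives $s_j=t_j$ for every $j\in\{0,1,\ldots,i\}$. In particular, $s_0=t_0$.

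To extend the agreement to indices $j>i$ I would invoke the previous proposition, which asserts that any preimage under $\Pi$ of a periodic point of $f$ is itself a periodic point of $\sigma$; choosing a common period $P$ of $\underline{s}$ and $\underline{t}$ (for instance, the product of their individual periods, or the uniform period $\mathrm{ord}(\mu)p$ furnished by the proof of the previous proposition), we obtain $s_{l+P}=s_l$ and $t_{l+P}=t_l$ for every $l$. Combining this with $s_0=t_0$ yields $s_{nP}=t_{nP}$ for every $n\in\NN$.

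For an arbitrary $j>i$, pick $n$ large enough that $nP\geq j$, and apply the lemma once more to the admissible blocks $(s_j,s_{j+1},\ldots,s_{nP})$ and $(t_j,t_{j+1},\ldots,t_{nP})$: their last symbols agree by the previous step, and the rectangle-intersection hypothesis still holds for the same reason as before. The conclusion $s_j=t_j$ follows, which completes the proof. The only mild subtlety is ensuring that a \emph{common} period is available so that we can match the last symbols in the second application of the lemma; this is exactly what the previous proposition supplies.
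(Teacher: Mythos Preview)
Your proof is correct and uses the same ingredients as the paper: the lemma on admissible blocks with matching last symbol and pairwise intersecting rectangles, together with the periodicity of preimages furnished by the previous proposition. The paper's argument is a little more economical: instead of applying the lemma twice (once backward to $0$, once forward past $i$), it applies it a single time to the blocks $(s_i,\ldots,s_{i+n})$ and $(t_i,\ldots,t_{i+n})$, where $n$ is a common period of $\underline{s}$ and $\underline{t}$. The last symbols agree because $s_{i+n}=s_i=t_i=t_{i+n}$, and the intersection hypothesis holds for the usual reason; one application then yields agreement on an entire period, hence on all of $\NN_0$, so your first backward step and the iteration over $nP$ are unnecessary.
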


\begin{proof}
In fact, since $\underline{s}$ and $\underline{t}$ are both periodic points, there is some common period $n$ such that $\sigma^n(\underline{s})=\underline{s}$ and $\sigma^n(\underline{t})=\underline{t}$. Then, the sequences $(s_i,s_{i+1},\ldots,s_{i+n})$ and $(t_i,t_{i+1},\ldots,t_{i+n})$ verify the hypothesis of the previous lemma: they end with the same element ($s_{i+n}=s_i=t_i=t_{i+n}$) and, by definition of $\Pi$, $f^m(x)\in R_{s_m}$ and $f^m(x)\in R_{t_m}$ for every $m\in\{i,\ldots,i+n\}$.
\end{proof}

For each $r\in[k]$, define:

\begin{defi}
$I_r=\left\{\{s_1,\ldots,s_r\}\subset[k]:\bigcap_{i=1}^r R_{s_i}\neq\emptyset\right\}$
where we assume that $s_1<s_2<\ldots<s_r$.
\end{defi}

\begin{defi}
$A^{(r)}$ and $B^{(r)}$ as matrices with coefficients indexed in the set $I_r$ given, satisfying the following conditions: fixing $s,t\in I_r$ with $s=\{s_1,...,s_r\}$ and $t=\{t_1,...,t_r\}$,
\begin{enumerate}
\item  if there is an unique permutation $\mu\in S_r$ such that $A_{s_i t_{\mu(i)}}=1$ for every $i\in[r]$, then $A^{(r)}_{s t}=1$ and $B^{(r)}_{s t}=sgn(\mu)$,
where $sgn(\mu)$ denotes the signature of the permutation $\mu$ (1 if the permutation is even and -1 if it is odd);
\item  otherwise, $A^{(r)}_{s t}=B^{(r)}_{s t}=0$.
\end{enumerate}
\end{defi}

Let $\Sigma_r^+=I_r^{\NN_0}$ be the set of sequences indexed by $\NN_0$ whose elements belong to $I_r$ and $\Sigma(A^{(r)})^+\subseteq\Sigma_r^+$ be the subset of admissible sequences according to the matrix $A^{(r)}$. Also, let $\sigma_r^+$ denote the unilateral shift defined on these sets. Now, we will see how to define a codification map $\hat{\Pi}_r:\Sigma(A^{(r)})^+\rightarrow K$.\\

Given a sequence $\underline{\hat{a}}=(\hat{a}_n)_n\in\Sigma(A^{(r)})^+$, with $\hat{a}_n=\{a_n^1,...,a_n^r\}\in I_r$, for every $n\in\NN_0$, there is, by definition of $\Sigma(A^{(r)})^+$, an unique permutation $\mu_n$ such that $A_{a_n^i a_{n+1}^{\mu_n(i)}}=1,\forall i\in[r]$. Consider the permutations defined by

$\nu_0=id$

$\nu_n=\mu_{n-1}\circ\ldots\circ\mu_1\circ\mu_0.$\\
Notice that $\mu_n\circ\nu_n=\nu_{n+1},\forall n\in\NN_0$.\\

For each $i\in[r]$ and $m\in\NN_0$, let $\alpha_m^i=a_m^{\nu_m(i)}$. Then, $\underline{\alpha}^i=\left(\alpha_m^i\right)_m$ belongs to $\Sigma_A^+$, for every $i\in[r]$. In fact, we have:

\[
A_{\alpha_m^i\alpha_{m+1}^i}=A_{a_m^{\nu_m(i)} a_{m+1}^{\nu_{m+1}(i)}}=A_{a_m^{\nu_m(i)}a_{m+1}^{\mu_m(\nu_m(i))}}=1,\forall m\in\NN_0
\]
For each $m\in\NN_0$, since $\hat{a}_m\in I_r$, we know that there is some $y_m\in\bigcap_{i=1}^r R_{a_m^i}$. So, for all $i,j\in[r]$ we have

\[
d(f^m(\Pi(\underline{\alpha}^i)),f^m(\Pi(\underline{\alpha}^j)))\leq
d(f^m(\Pi(\underline{\alpha}^i)),y_m)+d(y_m,f^m(\Pi(\underline{\alpha}^j)))\leq
\]

\[
\leq 2 \max_{n\in[k]}\left\{diam (R_n)\right\}<\delta<\vep/2
\]
which implies that $\Pi(\underline{\alpha}^i)=\Pi(\underline{\alpha}^j)$.\\

Hence, for each $r\in[k]$, we can define a map $\hat{\Pi}_r:\Sigma(A^{(r)})^+\rightarrow K$ by setting $\hat{\Pi}_r(\underline{\hat{a}})=\Pi(\underline{\alpha}^i)$, which does not depend on the choice of the index $i\in[r]$.\\

Let us verify that $\hat{\Pi}_r(Per_p(\Sigma(A^{(r)})^+))\subseteq Per_p(f)$. Given $\underline{\hat{a}}\in Per_p(\Sigma(A^{(r)})^+)$, we have

\[
\{\hat{\Pi}_r(\underline{\hat{a}})\}=\{\Pi(\underline{\alpha}^i)\}=\bigcap_{n\in\NN_0}f^{-n}(R_{\alpha_n^i})
\]
for any $i\in[r]$. So,

\[
\{\hat{\Pi}_r(\underline{\hat{a}})\}=\bigcap_{i\in[r]}\bigcap_{n\in\NN_0}f^{-n}(R_{\alpha_n^i})=\bigcap_{n\in\NN_0}f^{-n}\left(\bigcap_{i\in[r]}R_{\alpha_n^i}\right)=\bigcap_{n\in\NN_0}f^{-n}\left(\bigcap_{i\in[r]}R_{a_n^i}\right)
\]
and

\[
\{f^p(\hat{\Pi}_r(\underline{\hat{a}}))\}=f^p\left(\bigcap_{n\in\NN_0}f^{-n}\left(\bigcap_{i\in[r]}R_{a_n^i}\right)\right)\subseteq\bigcap_{n\in\NN_0}f^{p-n}\left(\bigcap_{i\in[r]}R_{a_n^i}\right)\subseteq\bigcap_{n\in\NN_0,n\geq p}f^{p-n}\left(\bigcap_{i\in[r]}R_{a_n^i}\right)=
\]

\[
=\bigcap_{n\in\NN_0}f^{-n}\left(\bigcap_{i\in[r]}R_{a_{n+p}^i}\right)=\bigcap_{n\in\NN_0}f^{-n}\left(\bigcap_{i\in[r]}R_{a_n^i}\right)=\{\hat{\Pi}_r(\underline{\hat{a}})\}
\]
because $f$ is surjective and $\hat{a}_n=\hat{a}_{n+p},\forall n\in\NN_0$. So, $f^p(\hat{\Pi}_r(\underline{\hat{a}}))=\hat{\Pi}_r(\underline{\hat{a}})$.\\

On the other hand, if $x\in Per_p(f)$, let $\underline{\alpha}^1,\ldots,\underline{\alpha}^r$ be the preimages of $x$ under the map $\Pi$ (notice that $r\leq k$, by a previous proposition). For each $m\in\NN_0$ and $i\in[r]$, we have $f^m(x)\in R_{\alpha_m^i}$, so $\bigcap_{i\in[r]}R_{\alpha_m^i}\neq\emptyset$ and, since $\alpha_m^i\neq\alpha_m^j$ for $i\neq j$ (by the previous proposition), we can define an element $\hat{a}_m\in I_r$ and, therefore, build a sequence $\underline{\hat{a}}=(\hat{a}_m)_{m\in\NN_0}\in\Sigma_r^+$.\\

Let us see that $\mu=id$ is the only permutation in $S_r$ such that $A_{\alpha_m^i \alpha_{m+1}^{\mu(i)}}=1,\forall i\in[r]$. Take a permutation $\mu\in S_r$, with order $\tau$, such that $A_{\alpha_m^i \alpha_{m+1}^{\mu(i)}}=1,\forall i\in[r]$. Given any $j\in[r]$, consider the two admissible sequences

\[
\alpha_n^j\alpha_{n+1}^{\mu(j)}\cdots\alpha_{n+q}^{\mu(j)}\alpha_{n+q+1}^{\mu^2(j)}\cdots\alpha_{n+(\tau-1)q}^{\mu^{\tau-1}(j)}\alpha_{n+(\tau-1)q+1}^j
\]
and

\[
\alpha_n^j\alpha_{n+1}^j\cdots\alpha_{n+q}^j\alpha_{n+q+1}^j\cdots\alpha_{n+(\tau-1)q+1}^j
\]
where $q$ is a common period of the preimages of $x$. By a previous lemma, they must be equal; in particular, $\alpha_{n+1}^{\mu(j)}=\alpha_{n+1}^j$. Then, the last proposition tells us that $\mu(j)=j$ and, therefore, $\mu=id$.\\

So, we have $\underline{\hat{a}}\in\Sigma(A^{(r)})^+$. Also, as we have seen before, the set of preimages of $x$ is invariant by $\sigma^p$. Then, for each $m\in\NN_0$, the element $\hat{a}_{m+p}\in I_r$, whose elements are $\alpha_{m+p}^1,\ldots,\alpha_{m+p}^r$, is the same as the element $\hat{a}_m\in I_r$, because its elements, $\alpha_{m}^1,\ldots,\alpha_{m}^r$ are the same (although not necessarily in the same order). Therefore $\hat{a}_{m+p}=\hat{a}_m$, that is, $\underline{\hat{a}}\in Per_p(\Sigma(A^{(r)})^+)$.\\

The next proposition provides a formula for the number of periodic points of $f$.

\begin{prop}
For all $p\in\NN$,
\[N_p(f)=\sum_{r=1}^L(-1)^{r-1}\tr((B^{(r)})^p)\]
where L is the largest value of $r\in[L]$ for which $I_r\neq\emptyset$ (notice that, if $I_r\neq\emptyset$, then $I_{r'}\neq\emptyset$ for $r'<r$).
\end{prop}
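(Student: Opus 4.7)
My plan is to expand $\tr((B^{(r)})^p)$ as a signed count over periodic orbits of the subshift $(\Sigma(A^{(r)})^+,\sigma_r^+)$, set up an inclusion--exclusion bijection with pairs $(x,S)$ where $x\in Per_p(f)$ and $S$ is a $\sigma^p$-invariant subset of the $\Pi$-preimages of $x$, and then invoke the identity $\det(I-P_\mu)=0$, valid for every permutation matrix $P_\mu$ because the all-ones vector is always fixed and so $1$ is an eigenvalue. Expanding the trace,
\[
\tr((B^{(r)})^p)=\sum_{\underline{\hat a}\in Per_p(\Sigma(A^{(r)})^+)}sgn(\nu_p(\underline{\hat a})),
\]
since a diagonal entry $((B^{(r)})^p)_{\hat a_0,\hat a_0}$ is nonzero precisely when every transition in $(\hat a_0,\hat a_1,\ldots,\hat a_{p-1},\hat a_0)$ is admissible for $A^{(r)}$, and in that case it equals $\prod_{n=0}^{p-1}sgn(\mu_n)=sgn(\nu_p)$.

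For $x\in Per_p(f)$, enumerate its $\Pi$-preimages as $\underline\beta^1,\ldots,\underline\beta^{r_x}$ and let $\mu_x\in S_{r_x}$ be the permutation with $\sigma^{p}(\underline\beta^j)=\underline\beta^{\mu_x(j)}$, which exists by the earlier proposition on periodic preimages. I claim that the assignment $\underline{\hat a}\mapsto(\hat\Pi_r(\underline{\hat a}),S)$, where $S\subseteq[r_x]$ records which $r$ of the $r_x$ preimages of $\hat\Pi_r(\underline{\hat a})$ appear as $\underline\alpha^1,\ldots,\underline\alpha^r$ in the construction of $\hat\Pi_r$, is a bijection onto $\{(x,S):x\in Per_p(f),\ S\subseteq[r_x],\ |S|=r,\ \mu_x(S)=S\}$ satisfying $sgn(\nu_p)=sgn(\mu_x|_S)$. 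From left to right, the identities $\alpha_{n+p}^i=a_{n+p}^{\nu_{n+p}(i)}$ and $\nu_{n+p}=\nu_n\circ\nu_p$ (the latter being a consequence of the $p$-periodicity of $\underline{\hat a}$) yield $\sigma^p(\underline\alpha^i)=\underline\alpha^{\nu_p(i)}$, so $\mu_x$ permutes $S$ and its restriction $\mu_x|_S$ is conjugate to $\nu_p$ through the natural indexing bijection, hence has the same signature. The inverse sends $(x,S)$ to $\underline{\hat a}^S$ with $\hat a_m^S=\{\beta_m^j:j\in S\}$: this lies in $I_r$ because $f^m(x)\in\bigcap_{j\in S}R_{\beta_m^j}$ and the letters $\beta_m^j$ for $j\in S$ are pairwise distinct by the previously established proposition; admissibility for $A^{(r)}$ is witnessed by $\mu_n=id$ in the natural labeling by $S$; uniqueness of $\mu_n$ follows by rerunning, restricted to the subfamily indexed by $S$, the same uniqueness argument the paper already applied to the full preimage set; and $\mu_x(S)=S$ gives $\hat a_{m+p}^S=\hat a_m^S$, so $\underline{\hat a}^S$ is $p$-periodic.

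Summing over $r$ and singling out the $r=0$ contribution (which equals $1$, corresponding to the empty subset),
\[
\sum_{r=1}^{L}(-1)^{r-1}\tr((B^{(r)})^p)=\sum_{x\in Per_p(f)}\!\left[\,1-\sum_{r=0}^{r_x}(-1)^r\!\!\!\sum_{\substack{S\subseteq[r_x]\\ |S|=r,\ \mu_x(S)=S}}\!\!\!sgn(\mu_x|_S)\,\right].
\]
The inner sum is the standard exterior-algebra expansion $\sum_{r=0}^{r_x}(-1)^r\tr(\Lambda^r P_{\mu_x})=\det(I-P_{\mu_x})$, which vanishes because $P_{\mu_x}$ fixes the all-ones vector. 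Hence $\sum_{r=1}^{L}(-1)^{r-1}\tr((B^{(r)})^p)=N_p(f)$. The principal obstacle is the bijection step---concretely, verifying that $\underline{\hat a}^S\in\Sigma(A^{(r)})^+$ for every $\mu_x$-invariant $S$, which amounts to checking that the uniqueness-of-transition-permutation argument already used in the text for the full preimage set carries through verbatim when one restricts to an arbitrary invariant subcollection of preimages.
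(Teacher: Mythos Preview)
Your proof is correct and follows essentially the same strategy as the paper: expand $\tr((B^{(r)})^p)$ as a signed sum over $Per_p(\Sigma(A^{(r)})^+)$, biject these with pairs $(x,S)$ where $S$ is a $\sigma^p$-invariant subset of $\Pi^{-1}(x)$, and evaluate the alternating sum fiberwise over $x$. The only cosmetic difference is in the final step: you collapse the inner sum via the exterior-algebra identity $\sum_r(-1)^r\tr(\Lambda^r P_{\mu_x})=\det(I-P_{\mu_x})=0$, whereas the paper parametrizes $\mu_x$-invariant subsets by subsets of the cycle decomposition of $\mu_x$ and applies the binomial theorem---these are two phrasings of the same computation, and your concern about the uniqueness-of-transition argument restricting to subfamilies is unfounded, since that argument uses only that the $\underline\alpha^j$ are $\Pi$-preimages of a common periodic point, a property inherited by any subcollection.
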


\begin{proof}
Given $x\in Per_p(f)$, consider the function given by

\[
\Phi(x)=\sum_{t=1}^L\left(\sum_{\underline{\hat{a}}\in\hat{\Pi}_t^{-1}(x)\bigcap Per_p(\Sigma(A^{(t)})^+)} (-1)^{t-1} sgn(\nu)\right)
\]
where $\nu$ is the unique permutation in $S_t$ such that $\alpha_p^{\nu(i)}=\alpha_0^{i},\forall i\in[t]$, with $\underline{\alpha}^i,i\in[t]$ the elements of $\Sigma_A^+$ constructed as before. We want to show that $\Phi(x)=1$. Let $\Pi^{-1}(x)=\left\{\underline{\alpha}^1,\ldots,\underline{\alpha}^r\right\}$ and $\mu$ be the permutation such that $\sigma^p(\underline{\alpha}^i)=\underline{\alpha}^{\mu(i)},\forall i\in\left[r\right]$, that is, the permutation induced by the action of $\sigma^p$ on $\Pi^{-1}(x)$. We can write $\mu$ as the product of disjoint cycles $\mu_1,\ldots,\mu_s$ (eventually with length 1) which act on the sets $K_1,\ldots,K_s$, respectively, and these sets form a partition of [r].\\

Given $\underline{\hat{a}}\in\hat{\Pi}_t^{-1}(x)$, we can build $t$ distinct preimages of $x$ under $\Pi$, with $t\leq r$. Let $J\subseteq [r]$ be such that these preimages are $(\underline{\alpha}^j)_{j\in J}$. If we suppose additionally that $\underline{\hat{a}}\in Per_p(\Sigma(A^{(t)})^+)$, then $J$ is invariant under $\nu$, so we can write $J=\bigcup_{m\in B}K_m$ for some $\emptyset\neq B\subseteq [s]$. On the other hand, for each nonempty subset $B$ of $[s]$, we can take $J=\bigcup_{m\in B}K_m$ and associate to it a sequence $\underline{\hat{a}}$ given by the set of distinct preimages $(\underline{\alpha}^j)_{j\in J}$.\\

So, for each $t\in[L]$ and $\underline{\hat{a}}\in\hat{\Pi}_t^{-1}(x)\bigcap Per_p(\Sigma(A^{(t)})^+)$, we can associate an unique nonempty subset $B$ of $[s]$, and we have

\[
t=card(J)=card\left(\bigcup_{m\in B}K_m\right)=\sum_{m\in B}card(K_m)
\]
Since $\mu_m$ is a cycle of length $card(K_m)$, we have

\[
sgn(\nu)=\prod_{m\in B}sgn(\mu_m)=\prod_{m\in B}(-1)^{card(K_m)+1}=(-1)^{t+card(B)}
\]
Hence,

\[
(-1)^{t-1} sgn(\nu)=(-1)^{2t-1+card(B)}=-(-1)^{card(B)}
\]

\[
\Phi(x)=\sum_{t=1}^L\left(\sum_{\underline{\hat{a}}\in\hat{\Pi}_t^{-1}(x)\bigcap Per_p(\Sigma(A^{(t)})^+)} (-1)^{t-1} sgn(\nu)\right)=
\]

\[
=-\sum_{\emptyset\neq B\subseteq[s]}(-1)^{card(B)}=-\sum_{q=1}^s\sum_{B\subseteq[s],card(B)=q}(-1)^{card(B)}=
\]

\[
=-\sum_{q=1}^s {s \choose q}(-1)^q={s \choose 0}(-1)^0-\sum_{q=0}^s {s \choose q}(-1)^q=1-(1-1)^s=1
\]
Since $Per_p(\Sigma(A^{(t)})^+)\subseteq \hat{\Pi}_t^{-1}(Per_p(f))$, we have

\[
N_p(f)=\sum_{x\in Per_p(f)}\Phi(x)=\sum_{x\in Per_p(f)}\sum_{t=1}^L\left( \sum_{\underline{\hat{a}}\in\hat{\Pi}_t^{-1}(x)\bigcap Per_p(\Sigma(A^{(t)})^+)}(-1)^{t-1} sgn(\nu)\right)=
\]

\[
=\sum_{t=1}^L\left(\sum_{\underline{\hat{a}}\in Per_p(\Sigma(A^{(t)})^+)}(-1)^{t-1} sgn(\nu)\right)=\sum_{t=1}^L (-1)^{t-1}\left(\sum_{\underline{\hat{a}}\in Per_p(\Sigma(A^{(t)})^+)} sgn(\nu)\right)
\]

Let $(\hat{a}_0,...,\hat{a}_n)$ be an admissible sequence of length $(n+1)$ for the matrix $A^{(t)}$ and let $\mu_m$ be the permutation which ensures that $A^{(t)}_{\hat{a}_m \hat{a}_{m+1}}=1$, for $m\in\left\{0,1,...,n-1\right\}$. Then, we have $B^{(t)}_{\hat{a}_m \hat{a}_{m+1}}=sgn(\mu_m)$.\\

Consider the permutations $\nu_m$ defined by $\nu_0=id$ and $\nu_m=\mu_{m-1}\circ...\circ\mu_0$. We have $\nu_{m+1}=\mu_m\circ\nu_m$ for $m\in\left\{0,1,...,n-1\right\}$. If $S(\hat{a}_0,\hat{a}_n,n)$ denotes the set of admissible sequences of length $n+1$ which start at $\hat{a}_0$ and end at $\hat{a}_n$, then we can show, by induction over $n$, that

\[
\sum_{S(\hat{a}_0,\hat{a}_n,n)} sgn(\nu_n)=((B^{(t)})^n)_{\hat{a}_0 \hat{a}_n}
\]
For $n=1$, given two elements $\hat{a}_0,\hat{a}_1\in I_t$ we have $\nu_1=\mu_0$, so

\[
sgn(\nu_1)=sgn(\mu_0)=(B^{(t)})_{\hat{a}_0 \hat{a}_1}
\]
Suppose now this is true for $n=m-1$. Then, for $n=m$ we have

\[
\sum_{S(\hat{a}_0,\hat{a}_m,m)} sgn(\nu_m)=
\sum_{S(\hat{a}_0,\hat{a}_m,m)} sgn(\mu_{m-1})sgn(\nu_{m-1})=
\]

\[
=\sum_{\left\{\hat{a}_{m-1}\in I_r:A^{(t)}_{\hat{a}_{m-1}\hat{a}_m}=1\right\}}\left(\sum_{S(\hat{a}_0,\hat{a}_{m-1},m-1)} sgn(\nu_{m-1})\right)sgn(\mu_{m-1})=
\]

\[
=\sum_{\left\{\hat{a}_{m-1}\in I_r:A^{(t)}_{\hat{a}_{m-1}\hat{a}_m}=1\right\}}((B^{(t)})^{m-1})_{\hat{a}_0 \hat{a}_{m-1}}B^{(t)}_{\hat{a}_{m-1} \hat{a}_m}=
((B^{(t)})^m)_{\hat{a}_0 \hat{a}_m}
\]
In particular, we get

\[
\sum_{S(\hat{a}_0,\hat{a}_0,n)} sgn(\nu_n)=((B^{(t)})^n)_{\hat{a}_0 \hat{a}_0}
\]
As for each sequence $\underline{\hat{a}}\in Per_p(\Sigma(A^{(t)})^+)$ we can associate an unique element of $S(\hat{a}_0,\hat{a}_0,p)$ which verifies $\nu_p=\nu$, we conclude that

\[
\sum_{\underline{\hat{a}}\in Per_p(\Sigma(A^{(t)})^+)} sgn(\nu)=
\sum_{\hat{a}_0\in I_t}((B^{(t)})^p)_{\hat{a}_0 \hat{a}_0}=
\tr((B^{(t)})^p)
\]
and therefore

\[
N_p(f)=\sum_{t=1}^L (-1)^{t-1}\tr((B^{(t)})^p)
\]

\end{proof}

\begin{theo}
If $f$ is Ruelle-expanding, then its zeta function is rational.
\end{theo}

\begin{proof}
As
\[
N_n(f)=\sum_{r=1}^L(-1)^{r-1}\tr((B^{(r)})^n)
=\sum_{r\in[L],r\,odd}\tr((B^{(r)})^n)-\sum_{r\in[L],r\,even}\tr((B^{(r)})^n)
\]
we have
\[
\zeta_f(t)=\exp\left(\sum_{n=1}^\infty\frac{N_n(f)}{n}t^n\right)=
\]
\[
=\exp\left(\sum_{n=1}^\infty\frac{\sum_{r\in[L],r\,odd}\tr((B^{(r)})^n)-\sum_{r\in[L],r\,even}\tr((B^{(r)})^n)}{n}t^n\right)=
\]
\[
=\frac{\exp\left(\sum_{n=1}^\infty\frac{\sum_{r\in[L],r\,odd}\tr((B^{(r)})^n)}{n}t^n\right)}
{\exp\left(\sum_{n=1}^\infty\frac{\sum_{r\in[L],r\,even}\tr((B^{(r)})^n)}{n}t^n\right)}
=\frac{\prod_{r\in[L],r\,odd}\exp\left(\sum_{n=1}^\infty\frac{\tr((B^{(r)})^n)}{n}t^n\right)}{\prod_{r\in[L],r\,even}\exp\left(\sum_{n=1}^\infty\frac{\tr((B^{(r)})^n)}{n}t^n\right)}=
\]
\[
=\frac{\prod_{r\in[L],r\,odd}\frac{1}{\det(I-tB^{(r)})}}{\prod_{r\in[L],r\,even}\frac{1}{\det(I-tB^{(r)})}}=\frac{\prod_{r\in[L],r\,even}\det(I-tB^{(r)})}{\prod_{r\in[L],r\,odd}\det(I-tB^{(r)})}
\]
which is clearly a rational function.

\end{proof}
\end{section}

\flushleft
{\bf M\'ario Alexandre Magalh\~aes} \ \ (mdmagalhaes@fc.up.pt)\\
CMUP, Rua do Campo Alegre, 687 \\ 4169-007 Porto \\ Portugal\\

\medskip


\begin{thebibliography}{100}

\ms

\bibitem{Shub} Shub, {\it Global stability of dynamical systems}, Springer 1987.

\bibitem{Walt} Walters, {\it An introduction to ergodic theory}, Springer-Verlag 1975.

\bibitem{MC} Marcos Craizer, {\it Teoria erg\'odica das transforma\c c\~oes expansoras}, Informes de Matem\'atica, S\'erie E-018-Agosto/85, IMPA.

\end{thebibliography}
\end{document}